\newtheorem{theorem}{Theorem}
\renewcommand{\Re}{\operatorname{Re}}
\renewcommand{\Im}{\operatorname{Im}}
\newtheorem{lemma}{Lemma}
\newtheorem{proposition}{Proposition}
\theoremstyle{noparens}
\title{\bf{Dynamic properties of a class of van der Pol-Duffing oscillators}\author{Yelei Kuang, Xuemei Li
\noindent\footnote{~Corresponding author}
\setcounter{footnote}{-1}
\noindent\footnote{~E-mail: yelei.k@hunnu.edu.cn(Y.Kuang), lixuemei\_1@sina.com(X.Li).}\\\\
\small Key Laboratory of High Performance Computing and Stochastic Information Processing,\\\\
\small Department of Mathematics, Hunan Normal University, Changsha, Hunan 410081, P.R. China.}}
\date{}
\begin{document}
\maketitle

\textbf {Abstract.} In this paper, we study the existence of bifurcation of a van der Pol-Duffing oscillator with quintic terms and its quasi-periodic solutions by means of qualitative and bifurcation theories. Firstly, we analyze the autonomous system and find that it has two kinds of local bifurcations and a global bifurcation: pitchfork bifurcation, Hopf bifurcation, homoclinic bifurcation. It is worth noting that the disappearance of the homoclinic orbit is synchronized with the emergence of a large limit cycle. Then, by discussing the stability of equilibria at infinity and the orientation of the trajectory, the existence and stability of limit circles of the autonomous system are analyzed by combining the Poincar\'{e}-Bendixson theorem and the index theory.  The global phase portrait and the numerical simulation of the autonomous system in different parameter values are given. Finally, the existence of periodic and quasi-periodic solutions to periodic forced system is proved by a KAM theorem.\\

{\bf Key words}: Homoclinic bifurcation; Hopf bifurcation; Limit circle; Stability; Quasi-periodic solution.

\section{Introduction and main result}
\label{chapter1}
\def\theequation{1.\arabic{equation}}
\setcounter{equation}{0}
\par
In 1927, the physicist Balthasar van der Pol\cite{L1} proposed a oscillator model
\begin{align}\label{1.1.1}
  \ddot{x}-k(1-x^2)\dot{x}+x=0,
\end{align}
 named after van der Pol oscillator, which is used to simulate the limit circle oscillation phenomenon of vacuum tube amplifier, or with a periodic forcing term
\begin{align}\label{1.1.2}
  \ddot{x}-k(1-x^2)\dot{x}+x=b\lambda \sin(\lambda t),
\end{align}
where $k$ is the damping coefficient. When $k=0$, the equation becomes an ordinary simple harmonic vibration equation. When $k<0$, it is damped vibration, and the vibration will gradually decay to 0; when $k>0$, self-excited motion occurs. van der Pol's work has become the basis for many modern theories of nonlinear oscillations. van der Pol oscillator has no analytical solution, but its approximate solutions are obtained by many methods, such as perturbation method\cite{L4}, multi-scale method\cite{L6}, KBM method\cite{L5} and so on.

The Duffing oscillator\cite{L2} is a vibration oscillator describing forced vibration, which is expressed by nonlinear differential equation in the following
\begin{align}\label{1.1.3}
  \ddot{x}+2r\dot{x}+\alpha x+\beta x^3=\delta\cos(\omega t),
\end{align}
where $\gamma$ is the damping coefficient, $\alpha$ is the toughness, $\beta$ is the nonlinearity of the controlling force, $\delta$ is the amplitude of the driving force, and $\omega$ is the angular frequency of the driving force.

When studying some practical problems,  van der Pol equation \eqref{1.1.1} and Duffing equation\eqref{1.1.3} are usually combined together,  called van der Pol-duffing equation
\begin{align}\label{1.1.4}
  \ddot{x}+k(1-x^2)\dot{x}+\alpha x+\beta x^3=\delta\cos(\omega t).
\end{align}
van der Pol-Duffing oscillator\eqref{1.1.4} is one of the most classical mathematical physics models. Because of its rich dynamical properties, it is widely used in physics, biology, engineering and even economics.
There are many generalized models of \eqref{1.1.4} also called van der Pol-Duffing oscillators, which have been extensively studied. Yu, Murza et al.\cite{L24} investigated the dynamics of autonomous ODE systems with D4-symmetry, and applied the  obtained results to two coupled van der Pol-Duffing oscillators with D4-symmetry. Xu and Luo\cite{L25} studied the periodic driven van der Pol-Duffing oscillator and showed the independent bifurcation tree from 2-periodic motion to the coexistence of chaotic and 1-periodic motion by semi-analytical approach. Monsivais, Velazquez et al. \cite{L26} investigated the dynamics of hierarchical weighted networks of van der Pol oscillators. In addition, many scholars have theoretically analyzed the global dynamical properties \cite{L15,L27,L16,L13,L17,L31,L14}, and found attractors and chaotic phenomena \cite{L18,L19,L20} in van der Pol-Duffing oscillator.

Based on the cell mapping method Han and Xu et al.\cite{L8} investigated the dynamical behaviors of a model of van der Pol-Duffing oscillator with a quintic term
\begin{align}\label{1.2.1}
    \left\{
        \begin{array}{ll}
        \dot{x}=y,\\
        \dot{y}=(\mu+x^{2}-x^{4})y+\omega_{0}^{2}x-\lambda x^{3}-\alpha x\cos(\omega t).
        \end{array}
    \right.
\end{align}
where $x, y$ are the state variable, $\mu, \omega_ {0}, \lambda$ and $\alpha$ are real parameters. It is obvious that the system \eqref{1.2.1} is symmetric because the two tracks generated by $(x,y)$ and $(-x,-y)$ are center symmetric about the origin at all times. For the fixed parameters  $\mu=-1, \omega_ {0}=1, \lambda=3, \omega=1$, though numerical simulations the authors showed the change of dynamical behaviors of \eqref{1.2.1} in the region $D=\{(x,y): ~|x|\leq 1.5, |y|\leq 2\}$ as varying the amplitude parameter $\alpha$.

The purpose of this paper is to theoretically analyze the global dynamic properties of \eqref{1.2.1}. For the convenience of writing, $\omega_{0}^2$ and $\lambda$ are marked as $\beta$ and $\epsilon$, respectively. The system \eqref{1.2.1} is written as
\begin{align}\label{1.2.2}
    \left\{
        \begin{array}{ll}
        \dot{x}=y,\\
        \dot{y}=(\mu+x^{2}-x^{4})y+\beta x-\epsilon x^{3}-\alpha x\cos(\omega t),
        \end{array}
    \right.
\end{align}
where $x, y$ are the state variable, $\mu,  \beta,  \epsilon, \alpha $ are real parameter and $\beta \geq0 $.  We will  first analyze global dynamical behaviors of the autonomous system without the external force $(\alpha=0)$, i.e
\begin{align}\label{1.2.3}
    \left\{
        \begin{array}{ll}
        \dot{x}=y,\\
        \dot{y}=(\mu+x^{2}-x^{4})y+\beta x-\epsilon x^{3}.
        \end{array}
    \right.
\end{align}
Obviously,  orbits of the system \eqref{1.2.3} are center symmetric about the origin. By the transformation
\begin{align*}
  x\rightarrow x,~~ y\rightarrow y-F(x),
\end{align*}
where $F(x)=-\mu x-\frac{1}{3}x^3+\frac{1}{5}x^5$. This equation \eqref{1.2.3} can be changed into the $\mathbb{Z}_{2}-$equivariant Li\'{e}nard equation
\begin{align}\label{1.2.5}
  \left\{
        \begin{array}{ll}
        \dot{x}=y-(-\mu x-\frac{1}{3}x^3+\frac{1}{5}x^5)\triangleq y-F(x),\\
        \dot{y}=-(-\beta x+\epsilon x^3)\triangleq -g(x).
        \end{array}
    \right.
\end{align}
We will discuss the existence of limit cycles of \eqref{1.2.3} by the Li\'{e}nard equation \eqref{1.2.5}.

In Section 2, we analyze the stability of equilibrium points $O(0,0)$, $E_{1}(x_{1},0)$, $E_{2}(x_{2},0)$, where $x_{1}=-\sqrt{\frac{\beta}{\epsilon}}$, $x_{2}=\sqrt{\frac{\beta}{\epsilon}}$, of the system \eqref{1.2.3} depending on different parameter values, and find that the pitchfork bifurcation and Hopf bifurcation occur at the equilibrium point $O$ and the equilibrium point $E_1$ (or $E_2$), respectively.  Then the system \eqref{1.2.3} is transformed into a near Hamnilton system, and the branch curve of the homoclinic orbit is obtained by the Melnikov function method.  In Section 3,
 we analyze the dynamical behavior of the system \eqref{1.2.3} at infinity by projecting the system \eqref{1.2.3} onto the ~Poincar\'{e} disk.  Then the existence and non-existence of limit cycles of the system \eqref{1.2.3} in different parameter ranges are proven based on the direction of orbits at infinity, and the global phase diagram is also given. In Section 4, it is first proved that the system \eqref{1.2.2} has a periodic solution with period $\frac{2\pi}{\omega}$ for some parameter regions. Then by using a KAM theorem we obtain that  the system \eqref{1.2.2} possesses a  quasi-periodic solution with two fundamental frequencies generated by the Hopf bifurcation of the system \eqref{1.2.3} with periodic perturbations (i.e., $|\alpha|$ is sufficiently small) near the points $E_{1,2}$. Numerical simulations are also carried out in Section 5.

Our main results are as follows.

\begin{theorem}\label{T2.2.0}
When $\epsilon>0$, System \eqref{1.2.3} goes through the following types of bifurcations in the parameter plane $(\beta,\mu)$ with $\beta\geq 0$

(a) pitchfork bifurcation

supercritical pitchfork bifurcation curve: ~~~$\mu_{-}=\{(\beta,\mu):~\beta=0,~ \mu<0\}$,

subcritical pitchfork bifurcation curve: ~~~$\mu_{+}=\{(\beta,\mu):~\beta=0,~\mu>0\}$;

(b) Hopf bifurcation

bifurcation curve:~~~  $H=\{(\beta,\mu):\mu=\frac{\beta^2-\epsilon\beta}{\epsilon^2},~\beta>0\}$;

(c) Homoclinic orbit bifurcation

bifurcation curve: ~~~$ P=\{(\beta,\mu):\mu=\frac{32\beta^2}{35\epsilon^2}-\frac{4\beta}{5\epsilon}\triangleq\mu_{3},~ \beta>0\}$.\\
The bifurcation diagram is presented in Figure 1.
\end{theorem}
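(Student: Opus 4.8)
The plan is to record the equilibria and the Jacobian once and then treat the three bifurcations in turn. Equating the right-hand sides of \eqref{1.2.3} to zero gives $y=0$ and $x(\beta-\epsilon x^{2})=0$, so $O(0,0)$ is an equilibrium for every $\beta\ge0$ while $E_{1,2}=(\pm\sqrt{\beta/\epsilon},0)$ exist precisely when $\beta>0$. The Jacobian at $(x,0)$ is $\bigl(\begin{smallmatrix}0 & 1\\ \beta-3\epsilon x^{2} & \mu+x^{2}-x^{4}\end{smallmatrix}\bigr)$; at $O$ it has trace $\mu$ and determinant $-\beta$, and at $E_{1,2}$ (using $x_{1,2}^{2}=\beta/\epsilon$) trace $\mu-(\beta^{2}-\epsilon\beta)/\epsilon^{2}$ and determinant $2\beta$. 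For part (a) I would take $\beta$ as the bifurcation parameter near $\beta=0$ with $\mu\ne0$ fixed: the matrix at $O$ then has eigenvalues $0$ and $\mu$, so a one-dimensional center-manifold reduction applies, and by the $\mathbb{Z}_{2}$-equivariance $(x,y)\mapsto(-x,-y)$ the reduced equation is odd, $\dot u=c_{1}(\beta)u+c_{3}(\beta)u^{3}+O(u^{5})$ with $c_{1}(0)=0$. Computing the center manifold $y=h(x)$ at $\beta=0$ gives $h(x)=(\epsilon/\mu)x^{3}+O(x^{4})$, hence $c_{3}(0)=\epsilon/\mu$; and since for small $\beta>0$ the eigenvalue of $J_O$ near zero equals $\det/\mathrm{tr}+O(\beta^{2})=-\beta/\mu+O(\beta^{2})$, one gets $c_{1}'(0)=-1/\mu\ne0$. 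These are the pitchfork nondegeneracy conditions, and because $\epsilon>0$ the cubic coefficient $c_{3}(0)=\epsilon/\mu$ is negative for $\mu<0$ and positive for $\mu>0$, which is exactly the supercritical/subcritical alternative asserted for $\mu_{-}$ and $\mu_{+}$.

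For part (b), on $H$ the trace at $E_{1,2}$ vanishes while the determinant is $2\beta>0$, so the linearization there has the simple purely imaginary pair $\pm i\sqrt{2\beta}$, and transversality holds trivially since $\partial_{\mu}(\mathrm{trace})=1\ne0$. It then remains to check that $E_{1,2}$ is not a nonlinear center, i.e.\ that the first Lyapunov (focal) value does not vanish on $H$: I would translate $E_{1,2}$ to the origin, expand $f(x)=-(\mu+x^{2}-x^{4})$ and $g(x)=-\beta x+\epsilon x^{3}$ about $x_{1,2}$ in the Li\'{e}nard form \eqref{1.2.5}, and apply the standard formula for the first focal value of a Li\'{e}nard equation; the Li\'{e}nard structure keeps this finite and explicit. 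This yields a Hopf bifurcation at $E_{1}$ and $E_{2}$ along $H$.

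For part (c), I would view \eqref{1.2.3} as a perturbation of the Hamiltonian system $\dot x=y$, $\dot y=\beta x-\epsilon x^{3}$, with $H(x,y)=\tfrac12 y^{2}-\tfrac12\beta x^{2}+\tfrac14\epsilon x^{4}$, which for $\beta>0$ has a pair of homoclinic loops $\Gamma_{\pm}$ to the saddle $O$ on the level $H=0$, namely $y=\pm x\sqrt{\beta-\tfrac{\epsilon}{2}x^{2}}$ with $0\le x^{2}\le 2\beta/\epsilon$. Since $\dot H=(\mu+x^{2}-x^{4})y^{2}$ along \eqref{1.2.3}, the Melnikov function along $\Gamma_{+}$ is $M=\oint_{\Gamma_{+}}(\mu+x^{2}-x^{4})\,y\,dx$; after the scaling $x=\sqrt{\beta/\epsilon}\,\xi$, $y=(\beta/\sqrt{\epsilon})\,\eta$, $t=\tau/\sqrt{\beta}$ the perturbation is genuinely small for $\beta$ and $\mu/\sqrt{\beta}$ small, so $M$ controls the splitting to first order. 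Substituting $u=x^{2}$ reduces $M$ to a positive multiple of $\mu+\tfrac{2}{5}u_{m}-\tfrac{8}{35}u_{m}^{2}$ with $u_{m}=2\beta/\epsilon$, so $M=0$ is equivalent to $\mu=\tfrac{8}{35}u_{m}^{2}-\tfrac{2}{5}u_{m}=\tfrac{32\beta^{2}}{35\epsilon^{2}}-\tfrac{4\beta}{5\epsilon}=\mu_{3}$. Because $\partial M/\partial\mu\ne0$, $M$ changes sign transversally across $P$, and the implicit function theorem applied to the distance between the stable and unstable manifolds of $O$ gives a genuine homoclinic orbit along a curve agreeing with $P$ to leading order; by the $\mathbb{Z}_{2}$-symmetry $M$ is the same for $\Gamma_{-}$, so both loops break simultaneously, consistent with the large limit cycle appearing on one side of $P$.

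The two obstacles I anticipate are, first, the first-focal-value computation in part (b), which is the only heavy algebra; and second, the legitimacy of the Melnikov argument in part (c), which is only asymptotically valid, so the clean statement is a homoclinic bifurcation for small $\beta>0$ along a branch coinciding with $P$ to leading order, and promoting this to the full range $\beta>0$ as stated would require additional global input (e.g.\ monotonicity of the loop integrals and uniqueness of the homoclinic).
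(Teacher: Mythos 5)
Your parts (a) and (c) follow essentially the same route as the paper. For the pitchfork, Proposition \ref{p1} performs exactly your center-manifold reduction: the reduced equation there is $\dot x=-\frac{\beta}{\mu}x+\frac{\epsilon}{\mu}x^{3}$, so your coefficients $c_{1}'(0)=-1/\mu$ and $c_{3}(0)=\epsilon/\mu$ and the resulting supercritical/subcritical dichotomy in the sign of $\mu$ coincide with the paper's. For the homoclinic curve, Lemma \ref{L3} uses the same Melnikov computation along the loops of the Hamiltonian system $\dot x=y$, $\dot y=\beta x-\epsilon x^{3}$, and its zero set is the same $\mu=\mu_{3}$ you obtain; your caveat that the Melnikov argument is only first order in a small rescaled parameter applies equally to the paper, which makes the damping terms formally small via $x\to\epsilon_{1}x$, $y\to\epsilon_{1}^{2}y$, $\beta\to\epsilon_{1}^{2}\beta$, $\mu\to\epsilon_{1}^{2}\mu$ and then undoes the scaling, so your honesty about the limitation does not put you below the paper's level of rigor.

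The substantive gap is in part (b). Checking the purely imaginary pair $\pm\mathrm{i}\sqrt{2\beta}$ on $H$ and the transversality $\partial_{\mu}(\mathrm{trace})=1$ does not yet yield a (nondegenerate) Hopf bifurcation: you still must show the first Lyapunov quantity is nonzero along $H$, and you only name this computation as an anticipated obstacle without carrying it out. That computation is precisely the content of the paper's Proposition \ref{p2}, which runs the Poincar\'{e} normal-form reduction at $E_{2}$ and finds $c_{1}(\mu_{c})=-\frac{1}{2\beta}<0$, so the Hopf bifurcation is supercritical and the small limit cycle is stable — facts the bifurcation diagram and the later KAM analysis of Section 4 depend on. Your proposed alternative, the standard first focal value formula for the Li\'{e}nard form \eqref{1.2.5} expanded at $E_{1,2}$, is a legitimate and likely lighter path to the same sign (the paper needs the full normal form anyway for Theorem \ref{D1}, which is why it does the heavier calculation), but as written the nondegeneracy is asserted rather than proved, so part (b) of the theorem is not yet established by your argument.
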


See Propositions 1-2 and Lemmas 2-5 for more details.

\begin{figure}[htbp]
  \centering
  \includegraphics[width=11.7cm]{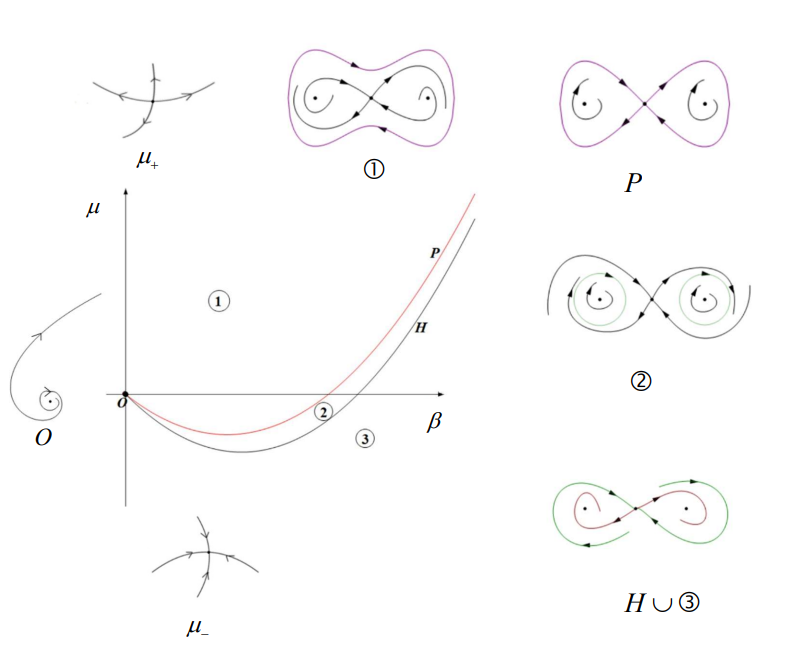}\\
  \caption{when $\epsilon>0$, the bifurcation diagram of System \eqref{1.2.3} \label{F2}}
\end{figure}

\begin{theorem}\label{D2}
When $\epsilon>0$ and $|\alpha|$ is sufficiently small, for most parameters in the sufficiently small right neighborhood of the Hopf bifurcation point $\mu=\mu_{c}(\triangleq\frac{\beta^2-\epsilon\beta}{\epsilon^2})$, the system \eqref{1.2.2} has a quasi-periodic solution with two fundamental frequencies near the point $E_{1}$ (or $E_{2}$).
\end{theorem}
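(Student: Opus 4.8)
The plan is to reduce the forced system \eqref{1.2.2} near $E_1$ (the case $E_2$ follows by the $\mathbb{Z}_2$ symmetry) to a small perturbation of an integrable normal form and then invoke a KAM theorem for quasi-periodic solutions. First I would shift the equilibrium by setting $x = x_1 + u$, $y = v$, so that the autonomous part of \eqref{1.2.2} has a focus at the origin whose linearization has eigenvalues crossing the imaginary axis exactly when $\mu = \mu_c = (\beta^2-\epsilon\beta)/\epsilon^2$ (this is the Hopf curve $H$ from Theorem 1, which I may assume). On the right of $\mu_c$ the autonomous system \eqref{1.2.3} has a small attracting (or repelling, depending on the first Lyapunov coefficient computed in Section 2) limit cycle bifurcating from $E_1$; rescaling $u,v$ by the square root of $\mu-\mu_c$ and passing to polar-type coordinates $(r,\theta)$ puts the autonomous flow in the form $\dot r = r(\sigma - a r^2) + O(r^4)$, $\dot\theta = \omega_* + O(r^2)$, where $\omega_*$ is the Hopf frequency at the bifurcation and $\sigma \sim (\mu-\mu_c)$.

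Next I would normalize: choose action-angle-like variables so that the limit cycle becomes a circle $r = r_0(\mu)$ and the autonomous flow near it is, up to higher order, a rotation with frequency $\omega_1(\mu)$ together with exponential contraction transverse to the cycle. Adding back the forcing term $-\alpha x \cos(\omega t)$ and treating the time variable $\omega t$ as a second angle $\phi$, the full system \eqref{1.2.2} becomes a periodically forced oscillator with internal frequency $\omega_1(\mu)$ and external frequency $\omega$. Because $|\alpha|$ is small and we sit in a fixed small right-neighborhood of $\mu_c$, the perturbation away from the two-frequency quasi-periodic motion $(\theta,\phi) \mapsto (\theta+\omega_1 t,\ \phi+\omega t)$ is small. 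The existence of a genuine invariant torus carrying quasi-periodic motion with frequency vector $(\omega_1(\mu),\omega)$ then follows from a KAM theorem, provided a nondegeneracy (twist) condition holds — i.e., $\omega_1$ depends on the action with nonzero derivative, which is exactly what the nonvanishing first Lyapunov coefficient $a$ guarantees — and provided $(\omega_1(\mu),\omega)$ satisfies a Diophantine condition. The latter holds for $\mu$ in a set of positive measure (indeed of full density at $\mu_c$ in the limit), which is the meaning of ``for most parameters in the sufficiently small right neighborhood.''

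The main obstacle, and the step requiring the most care, is putting the dissipative forced system into a form to which a KAM theorem actually applies: standard KAM theory is for Hamiltonian or reversible systems, whereas \eqref{1.2.2} is genuinely dissipative near the limit cycle. The right framework here is a KAM theorem for \emph{dissipative} systems with an attracting invariant torus (of the type used for Hopf bifurcation under quasi-periodic forcing), where the transverse contraction is used to set up a contraction/implicit-function scheme on the torus and the small-divisor problem only appears in the tangential angular directions. I would therefore need to verify the hypotheses of such a theorem: sufficient smoothness of the vector field (polynomial, so fine), the twist/nondegeneracy condition inherited from Section 2's Hopf analysis, and the smallness of both $|\alpha|$ and the detuning $\mu - \mu_c$ relative to the Diophantine constants. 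Once the quoted KAM theorem's hypotheses are checked, the conclusion — existence of a 2-frequency quasi-periodic solution near $E_1$ for a positive-measure (in fact asymptotically full-measure) set of $\mu$ just above $\mu_c$ — is immediate, and the $E_2$ case is obtained by applying the symmetry $(x,y)\to(-x,-y)$.
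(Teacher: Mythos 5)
Your proposal follows essentially the same route as the paper: shift to $E_{2}$ (using the symmetry for $E_{1}$), Hopf normal form in polar coordinates, the bifurcating circle as a normally hyperbolic invariant set, the forcing phase $\psi=\omega t$ as a second angle, and a dissipative (non-conservative) KAM theorem with a measure estimate in the detuning parameter — the paper invokes exactly such a theorem (Li--Shang \cite{L35}) after the scalings $\xi\to\varepsilon\xi$, $\rho=\rho_{0}+\varepsilon^{3/2}I$, $\alpha\to\varepsilon^{3}\alpha$, restricting the rescaled detuning to $\xi\in[\tfrac{1}{16},1]$ so the normal contraction rate $|A_{2}(\xi)|=|\xi|$ is bounded below. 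One step in your sketch would not survive scrutiny as written: you attribute the nondegeneracy needed for the Diophantine/measure estimate to the nonvanishing first Lyapunov coefficient. That coefficient ($\Re c_{1}(\mu_{c})<0$) gives existence and normal hyperbolicity of the invariant circle, but what the measure estimate needs is that the frequency of the motion on the circle, $w_{1}(\xi)=\Im\lambda(\xi)-\Re\lambda(\xi)\,\Im c_{1}(\xi)/\Re c_{1}(\xi)$, genuinely varies with the parameter; this involves $\Im c_{1}$ as well, and the paper checks it by explicit computation (Remarks 2--3). Moreover, after the scaling the internal frequency has the form $w_{10}+\varepsilon w_{11}(\xi)+O(\varepsilon^{2})$ with constant leading term, so the frequency map is degenerate (its variation is only $O(\varepsilon)$ while the external frequency $\omega$ is fixed); consequently the measure-estimate part of the cited KAM theorem does not apply off the shelf, and the paper excludes the resonant zones $\{\xi:\,|k_{1}w_{1}+k_{2}\omega|<\varepsilon\gamma_{0}/|k|_{1}^{3}\}$ by hand, splitting into the cases $k_{1}w_{10}+k_{2}\omega=0$ and $\neq 0$ and using $\inf_{\xi}|\partial_{\xi}w_{11}|>0$. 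Apart from this — identifying the correct nondegeneracy and handling the degenerate frequency map — your outline is the paper's argument.
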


See Theorem 3 for more details.

\section{Local dynamics}
\label{chapter2}
\def\theequation{2.\arabic{equation}}
\setcounter{equation}{0}
\par
In this section, we analyze the pitchfork bifurcation, Hopf bifurcation and homoclinic orbit bifurcation of the system \eqref{1.2.3}.
Since when $\beta=\epsilon=0$, all points on the line $y=0$ are the equilibrium points of the system \eqref{1.2.3}, we assume $\beta$ and $\epsilon$ are not 0 at the same time in the sequel.

\begin{lemma}\label{L1}
The system \eqref{1.2.3} has only one equilibrium, $O$ in the case with $\beta=0$ or $\epsilon\leq0$; has three equilibrium points, $O$ and $E_{1,2}$ in the case with $\beta>0$ and $\epsilon>0$. Their properties are shown in the Table \ref{B2}, Where $\mu_{1}=\frac{\beta^2-\beta\epsilon-2\epsilon^2\sqrt{2\beta}}{\epsilon^2}$,  $\mu_{2}=\frac{\beta^2-\beta\epsilon+2\epsilon^2\sqrt{2\beta}}{\epsilon^2}$,
$\mu_{c}=\frac{-\epsilon\beta+\beta^2}{\epsilon^2}$.

\begin{table}[!ht]
\center
\caption{The type and stability of equilibrium points in different parameter value ranges}\label{B2}
\resizebox{\linewidth}{!}{
\begin{tabular}{|c|c|c|c|}
\hline
\multicolumn{3}{|c|}{possibilities of $(\epsilon, \beta, \mu)$} & type and stability\\ \hline
\multicolumn{3}{|c|}{$\epsilon\leq0,\beta>0,\mu\in R$} & $O(0,0)$, saddle  \\ \hline
\multicolumn{3}{|c|}{$\epsilon<0,\beta=0,\mu\in R$} & $O(0,0)$, saddle \\ \hline

\multirow{7}{*}{$\epsilon>0$} & \multirow{3}{*}{$\beta=0$} & $\mu<0$ & $O(0,0)$, stable node\\ \cline{3-4}
\multirow{7}{*}{} & \multirow{3}{*}{} & $\mu=0$ & $O(0,0)$, unstable focus\\ \cline{3-4}
\multirow{7}{*}{} & \multirow{3}{*}{} & $\mu>0$ & $O(0,0)$, unstable node\\ \cline{2-4}

\multirow{7}{*}{} & \multirow{3}{*}{$\beta>0$} & $\mu\leq\mu_{1}$ & $O(0,0)$, saddle; $E_{1}(-\sqrt{\frac{\beta}{\epsilon}},0)$, $E_{2}(\sqrt{\frac{\beta}{\epsilon}},0)$, stable nodes\\ \cline{3-4}
\multirow{7}{*}{} & \multirow{3}{*}{} & $\mu_{1}<\mu\leq\mu_{c}$ & $O(0,0)$, saddle; $E_{1}(-\sqrt{\frac{\beta}{\epsilon}},0)$, $E_{2}(\sqrt{\frac{\beta}{\epsilon}},0)$, stable foci\\ \cline{3-4}
\multirow{7}{*}{} & \multirow{3}{*}{} & $\mu_{c}<\mu<\mu_{2}$ & $O(0,0)$, saddle; $E_{1}(-\sqrt{\frac{\beta}{\epsilon}},0)$, $E_{2}(\sqrt{\frac{\beta}{\epsilon}},0)$, unstable foci\\ \cline{3-4}
\multirow{7}{*}{} & \multirow{3}{*}{} & $\mu\geq\mu_{2}$ & $O(0,0)$, saddle; $E_{1}(-\sqrt{\frac{\beta}{\epsilon}},0)$, $E_{2}(\sqrt{\frac{\beta}{\epsilon}},0)$, unstable nodes\\ \hline
\end{tabular}
}

\end{table}

\end{lemma}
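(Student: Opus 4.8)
The plan is to locate the equilibria by solving $\dot x=\dot y=0$, classify each of them from the Jacobian, and deal with the two degenerate situations ($\beta=0$, and the Hopf value $\mu=\mu_c$) by ad hoc arguments. First, $\dot x=0$ forces $y=0$, and then $\dot y=0$ reads $x(\beta-\epsilon x^2)=0$; so $O(0,0)$ is always an equilibrium, while $x=\pm\sqrt{\beta/\epsilon}$ gives two further real, nonzero equilibria exactly when $\beta/\epsilon>0$, i.e.\ (since $\beta\ge0$) when $\beta>0$ and $\epsilon>0$, namely $E_{1,2}(\mp\sqrt{\beta/\epsilon},0)$. This settles the counting part. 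The Jacobian of \eqref{1.2.3} is
\[
J(x,y)=\begin{pmatrix}0 & 1\\ (2x-4x^3)y+\beta-3\epsilon x^2 & \mu+x^2-x^4\end{pmatrix}.
\]

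For $E_{1,2}$ (present only when $\beta,\epsilon>0$), substituting $x^2=\beta/\epsilon$ and $y=0$ gives $J=\left(\begin{smallmatrix}0&1\\-2\beta&\mu-\mu_c\end{smallmatrix}\right)$ with $\mu_c=(\beta^2-\epsilon\beta)/\epsilon^2$, hence $\det J=2\beta>0$ (so $E_{1,2}$ is never a saddle) and $\operatorname{tr}J=\mu-\mu_c$. Therefore stability is governed by the sign of $\mu-\mu_c$ and the node/focus alternative by the discriminant $(\mu-\mu_c)^2-8\beta$, which vanishes exactly at $\mu=\mu_c\pm 2\sqrt{2\beta}$, that is, at the values $\mu_1$ and $\mu_2$ of the statement (with $\mu_1<\mu_c<\mu_2$). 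A short check of where $\mu$ lies relative to $\mu_1,\mu_c,\mu_2$ then yields: stable node for $\mu\le\mu_1$ (a degenerate node at $\mu=\mu_1$), stable focus for $\mu_1<\mu<\mu_c$, unstable focus for $\mu_c<\mu<\mu_2$, and unstable node for $\mu\ge\mu_2$. The sole value not decided by the linear part is $\mu=\mu_c$, where the eigenvalues are $\pm i\sqrt{2\beta}$; here one must compute the first focal value and verify that it is negative, which I would take from the Hopf-bifurcation analysis below (it makes the bifurcation supercritical, so $E_{1,2}$ is a stable weak focus when $\mu=\mu_c$).

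For $O(0,0)$ we have $J(O)=\left(\begin{smallmatrix}0&1\\\beta&\mu\end{smallmatrix}\right)$. If $\beta>0$, then $\det J(O)=-\beta<0$ for every $\mu$ and $\epsilon$, so $O$ is a hyperbolic saddle, which covers all the rows with $\beta>0$ and $\epsilon\le0$. If $\beta=0$ (hence $\epsilon\ne0$), the linear part has eigenvalues $0,\mu$ and is degenerate, and I would split into two subcases. (i) If $\mu\ne0$, then $O$ is semi-hyperbolic; solving the invariance equation produces the center manifold $y=h(x)=\tfrac{\epsilon}{\mu}x^3+O(x^4)$, so the reduced flow is $\dot x=\tfrac{\epsilon}{\mu}x^3+O(x^4)$, and combining its (in)stability --- unstable iff $\epsilon/\mu>0$ --- with the sign of the transverse eigenvalue $\mu$, the reduction principle gives a stable node for $\epsilon>0,\mu<0$, an unstable node for $\epsilon>0,\mu>0$, and a saddle in the two cases $\epsilon<0$. (ii) If $\mu=0$, then $O$ is a nilpotent singular point of $\dot x=y$, $\dot y=-\epsilon x^3+(x^2-x^4)y$; a weighted blow-up with weights $(1,2)$ turns $O$ into singular points on an exceptional circle, which are hyperbolic saddles when $\epsilon<0$ (so $O$ is a topological saddle) and absent when $\epsilon>0$ (so $O$ is monodromic, hence a focus or a center). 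In the latter case Bendixson's criterion rules out a center, since here the divergence of \eqref{1.2.3} is $x^2-x^4>0$ on $0<|x|<1$ and thus no small closed orbit can surround $O$; and the function $V=\tfrac12 y^2+\tfrac{\epsilon}{4}x^4$, for which $\dot V=(x^2-x^4)y^2\ge0$ near $O$ with $\{(0,0)\}$ the only invariant subset of $\{\dot V=0\}$, rules out stability. Hence $O$ is an unstable focus. Collecting all the cases yields Table \ref{B2}.

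The routine parts are the Jacobian computation and the case split for $E_{1,2}$. I expect the main obstacle to be the degenerate corner $\beta=0$, and within it the nilpotent case $\mu=0$, $\epsilon>0$, where neither linearization nor the center-manifold reduction applies: one has to combine a blow-up (to establish monodromy), Bendixson's criterion (to exclude a center) and an explicit Lyapunov function (to get instability) in order to justify the entry ``unstable focus''. The borderline value $\mu=\mu_c$ at $E_{1,2}$ is the other place that needs input beyond the linear part --- the sign of the first focal value --- which is supplied by the Hopf computation carried out later in this section.
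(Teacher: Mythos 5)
Your proposal is correct and follows essentially the same route as the paper: locate the equilibria, classify $O$ and $E_{1,2}$ by trace/determinant with the discriminant $(\mu-\mu_c)^2-8\beta$ vanishing at $\mu_{1,2}$, and treat the degenerate corner $\beta=0$ separately, ruling out a center at $\mu=0,\ \epsilon>0$ by the sign-definite divergence $x^2-x^4$ and proving instability with the Lyapunov function $V=\tfrac12 y^2+\tfrac{\epsilon}{4}x^4$ — exactly the paper's argument. The only difference is the choice of standard tool for the degenerate cases: you use center-manifold reduction (for $\beta=0,\ \mu\neq0$) and a weighted $(1,2)$ blow-up (for the nilpotent case), whereas the paper invokes Theorems 7.1 and 7.2 of Chapter 2 of Zhang et al., which encode the same classifications; the conclusions agree. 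You are in fact slightly more careful than the paper's proof on the boundary $\mu=\mu_c$ (included in the ``stable foci'' row), where you correctly note that the linear part is inconclusive and the negative first Lyapunov coefficient $c_{1}(\mu_c)=-\tfrac{1}{2\beta}$, computed later in the Hopf analysis, is what settles it.
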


\begin{proof}
(I) We first prove the case where the system\eqref {1.2.3} has only one equilibrium. When parameter $\beta = 0$ or $\beta > 0 $, $\epsilon\leq0 $, solving
\begin{align*}
  &y = 0, \\
  &(\mu+x^{2}-x^{4})y+\beta x-\epsilon x^{3} = 0,
\end{align*}
it is easy to see system\eqref {1.2.3} has a unique equilibrium $O(0, 0)$. Its corresponding Jacobian matrix $A_{0}$ for system\eqref {1.2.3} at $O(0, 0)$ is
\begin{equation*}
  A_{0}=\left(
  \begin{array}{cc}
     0 & 1 \\
     \beta & \mu
   \end{array}
  \right).
\end{equation*}
For the characteristic equation is,
$$\lambda^2-\mu\lambda-\beta=0,$$
its characteristic root can be solved as
$$\lambda_{1}=\frac{1}{2}(\mu+\sqrt{\mu^2+4\beta}),~~\lambda_{2}=\frac{1}{2}(\mu-\sqrt{\mu^2+4\beta}).$$
Since $\beta\geq0$, $\mu^2+4\beta\geq0$ is constant. Thus,
(1) when the $\beta > 0 $, $\epsilon \leq0 $, $\mu \in \mathbb{R} $, $\lambda_{1}, \lambda_{2} $ is real number and $\lambda_{1} > 0 $, $\lambda_{2} <0$. So the equilibrium $O(0,0)$ is the saddle of the system.
\par
(2) When the $\beta = 0 $, $\epsilon \neq0 $, $\mu \neq0 $, $\lambda_ {1}, \lambda_ {2}$ are real numbers and $\lambda_{1}=0 $,  $\lambda_ {2}=\mu $. So the equilibrium $O(0,0)$ is degenerate. We will analyze its stability using [\cite{L11}, Theorem 7.1 of Chapter 2], where the system is
\begin{align}\label{2.1.2}
    \left\{
        \begin{array}{ll}
        \dot{x}=y,\\
        \dot{y}=(\mu+x^{2}-x^{4})y-\epsilon x^{3}.
        \end{array}
    \right.
\end{align}
make $x\rightarrow x+\frac{y}{\mu}$, $y\rightarrow y$, $t\rightarrow\frac{t}{\mu}$ substitute into the system \eqref{2.1.2}, we have
\begin{align}\label{2.1.3}
    \left\{
        \begin{array}{ll}
        \dot{x}=&\frac{\epsilon}{\mu^2}x^3+\frac{3\epsilon-\mu}{\mu^3}x^2y+\frac{3\epsilon-\mu}{\mu^4}xy^2+\frac{\epsilon-\mu}{\mu^5}y^3\\
        &+\frac{1}{\mu^2}x^4y+\frac{4}{\mu^3}x^3y^2+\frac{6}{\mu^4}x^2y^3+\frac{4}{\mu^5}xy^4+\frac{1}{\mu^6}y^5\triangleq P(x,y),\\
        \dot{y}=&y-\frac{\epsilon}{\mu}x^3-\frac{3\epsilon-\mu}{\mu^2}x^2y-\frac{3\epsilon-\mu}{\mu^3}xy^2-\frac{\epsilon-\mu}{\mu^4}y^3\\
        &-\frac{1}{\mu}x^4y-\frac{4}{\mu^2}x^3y^2-\frac{6}{\mu3}x^2y^3-\frac{4}{\mu^4}xy^4-\frac{1}{\mu^5}y^5\triangleq y+Q(x,y).
        \end{array}
    \right.
\end{align}
Make $y + Q(x, y) = 0 $ obtained $y = \varphi (x) $, since $Q(x, y) $is analytic function, and $\varphi (0) = {\varphi} \dot (0) = 0 $, so the method of undetermined coefficients can be used for the $\varphi (x) $. Assuming $\varphi(x) = a_{2} x^2 + a_{3} x^3 + a_{4}x^4 +\cdots\cdots $, in $\varphi(x) + Q(x, \varphi(x) = 0$ and compare the coefficient get
\begin{align*}
 &a_{2}=0,~a_{3}-\frac{\epsilon}{\mu}=0,~a_{4}-\frac{-\mu+3\epsilon}{\mu^2}a_{2}=0,\\
 &a_{5}-\frac{-\mu+3\epsilon}{\mu^2}a_{3}-\frac{3\epsilon-2\mu}{\mu^3}a^2_{2}=0, \cdots
\end{align*}
solve for
\begin{align*}
  a_{2}=0,~ a_{3}=\frac{\epsilon}{\mu}, ~a_{4}=0, a_{5}=\frac{-\mu\epsilon+3\epsilon^2}{\mu^3}, \cdots,
\end{align*}
So we can get
\begin{align*}
  \varphi(x)=\frac{\epsilon}{\mu}x^3+\frac{-\mu\epsilon+3\epsilon^2}{\mu^3}x^5+\cdots.
\end{align*}
Substitute $y=\varphi(x)$ into $P(x,y)$ to get
\begin{align*}
  P(x,\varphi(x))=\frac{\epsilon}{\mu^2}x^3+\cdots,
\end{align*}
That is, when $m=3$, $a_{3}=\frac{\epsilon}{\mu^2}$.
Considering the time transformation $t=\frac{t}{\mu} $, when $\mu > 0 $, time transformation is positive, when $\mu<0 $, time transformation is the reverse. Hence, by [\cite{L11}, Theorem 7.1 of Chapter 2], we can get
when $\mu>0$, $\epsilon>0$, the equilibrium $O(0,0)$ is an unstable node of the system;
when $\mu>0$, $\epsilon<0$, the equilibrium $O(0,0)$ is a saddle of the system;
when $\mu<0$, $\epsilon>0$, the equilibrium $O(0,0)$ is a stable node of the system;
when $\mu<0$, $\epsilon>0$, the equilibrium $O(0,0)$ is a saddle of the system.

(3) When $\beta=0 $, $\epsilon\neq0 $, $\mu=0 $, two eigenvalues are equal to zero. In this case, the system is
\begin{align}\label{2.1.4}
    \left\{
        \begin{array}{ll}
        \dot{x}=y,\\
        \dot{y}=(x^{2}-x^{4})y-\epsilon x^{3}.
        \end{array}
    \right.
\end{align}
Obviously the corresponding [\cite{L11}, Theorem 7.2 of Chapter 2] the $P_{2}(x, y)\equiv0$, and $Q_{2}(x, y(x))=-\epsilon x^3$, $Q_{2y}(x,y(x))=x^2-x^4$, which in this case $m=1$, $n=2$, $a_{2m+1}=-\epsilon$.
So by [\cite{L11}, Theorem 7.2 of Chapter 2], when $\epsilon<0 $, $a_{2m+1} = -\epsilon>0 $, the equilibrium $O(0, 0)$ is a saddle; when $\epsilon>0$, $a_{2m+1}=-\epsilon<0$ and $n>m$, the equilibrium $O(0,0)$ is a center or focus. Let's use the Lyapnouv function method to determine the type of equilibrium $O(0,0)$ in the case of $\epsilon>0$.

For any $x\in(-1,1)$, construct the Lyapnouv function as
\begin{align}\label{2.1.4.1}
  V(x,y)=\frac{\epsilon}{4}x^4+\frac{1}{2}y^2.
\end{align}
Since $\epsilon>0$, $V(x,y)\geq0$ is a positive definite function, and
\begin{align}\label{2.1.5}
  \frac{dV}{dt}\mid_{(\ref{2.1.4})}=x^2y^2(1-x^2)\geq0.
\end{align}
Therefore, the equilibrium $O(0,0)$ is unstable. Also because
\begin{align*}
  \mathrm{div}\{y,(x^{2}-x^{4})y-\epsilon x^{3}\}=x^2-x^4,
\end{align*}
at $-1<x<1$ is invariant and not constant zero, so there is no closed orbit. So in this case $O(0,0)$ is an unstable focus.

(II) When $\epsilon>0$, $\beta>0$, we know from
\begin{align*}
  &y=0,\\
  &(\mu+x^{2}-x^{4})y+\beta x-\epsilon x^{3}=0,
\end{align*}
that the system has three equilibria $O(0,0)$, $E_{1}(x_{1},0)$ and $E_{2}(x_{2},0)$, where $x_{1}=-\sqrt{\frac{\beta}{\epsilon}}$, $x_{2}=\sqrt{\frac{\beta}{\epsilon}}$.
According to the previous analysis, the origin $O(0,0)$ is a saddle of the system\eqref{1.2.3}. Let's analyze the stability of the other two equilibria, $E_{1}(x_{1},0)$ and $E_{2}(x_{2},0)$. $E_{1}(x_{1},0)$ and $E_{2}(x_{2}, 0)$ corresponding Jacobian matrix, the same as
\begin{equation*}
  B=\left(
     \begin{array}{cc}
       0& 1 \\
       -2\beta & \mu+\frac{\beta}{\epsilon}-\frac{\beta^2}{\epsilon^2} \\
     \end{array}
   \right).
\end{equation*}
By Jacobian matrix to get the equilibrium $E_{1}(x_{1},0)$ and $E_{2}(x_{2},0)$ corresponding characteristic equation\eqref{2.1.7}
\begin{equation}\label{2.1.7}
  \lambda^2-(\mu+\frac{\beta}{\epsilon}-\frac{\beta^2}{\epsilon^2})\lambda+2\beta=0.
\end{equation}
The eigenvalue can be calculated as
$$\lambda_{1}=\frac{\epsilon^2\mu+\epsilon\beta-\beta^2}{2\epsilon^2}+\frac{\sqrt{\Delta}}{2},$$
$$\lambda_{2}=\frac{\epsilon^2\mu+\epsilon\beta-\beta^2}{2\epsilon^2}-\frac{\sqrt{\Delta}}{2},$$
where $\Delta=(\mu+\frac{\beta}{\epsilon}-\frac{\beta^2}{\epsilon^2})^2-8\beta.$ For simplicity, let's say
$\mu_{c}=\frac{-\epsilon\beta+\beta^2}{\epsilon^2}$,
\begin{align*}
  \mu_{1}&=\frac{\beta^2-\beta\epsilon-2\epsilon^2\sqrt{2\beta}}{\epsilon^2}=\mu_{c}-2\sqrt{2\beta}, \\
  \mu_{2}&=\frac{\beta^2-\beta\epsilon+2\epsilon^2\sqrt{2\beta}}{\epsilon^2}=\mu_{c}+2\sqrt{2\beta}.
\end{align*}
Through simple calculations we can see:
when $\mu\leq\mu_{1}$, be able to get $\Delta\geq0$, and the characteristic values are negative real numbers, so $E_{1}(x_{1},0)$ and $E_{2}(x_{2},0)$ are stable nodes;
when $\mu\geq\mu_{2} $, be able to get $\Delta\geq0$, and the characteristic values are positive real numbers, so $E_{1}(x_{1}, 0) $ and $E_{2}(x_{2},0)$ are unstable nodes;
when $\mu_{1}<\mu<\mu_{c}$, you can get $\Delta<0$, imaginary eigenvalues for a negative real component, so $E_{1}(x_{1},0)$ and $E_{2}(x_{2},0)$ are stable foci;
when $\mu_{c}<\mu<\mu_{2}$, you can get $\Delta<0 $, imaginary eigenvalues for positive real part, so $E_{1}(x_{1},0)$ and $E_{2}(x_{2},0)$ are unstable foci.
\end{proof}
\par

Based on Lemma \ref{L1}, we investigate bifurcations from finite equilibria in the following two propositions.
\begin{proposition}\label{p1}
When $\beta$ passes through $0$, the autonomous system\eqref{1.2.3} will generate two pitchfork bifurcations in the small neighborhood of the equilibrium $O(0,0)$: \\
(i) If $\epsilon>0,\mu>0$, a subcritical pitchfork bifurcation is generated at $\beta=0$. \\
(ii) If $\epsilon>0,\mu<0$, a supercritical pitchfork bifurcation is generated at $\beta=0$.
\end{proposition}

\begin{proof}
By restricting the discussion of autonomous system\eqref{1.2.3} to the central manifold, taking $\beta$ as the bifurcation parameter and $\beta$ as the new independent variable, system\eqref{1.2.3} is rewritten as
\begin{equation}\label{2.2.1.1}
  \left\{
  \begin{array}{l}
          \dot{x}=y,\\
          \dot{y}=\mu y+\beta x-\epsilon x^3+(x^2-x^4)y-\epsilon x^3,\\
          \dot{\beta}=0,
        \end{array}
  \right.
\end{equation}
where $\beta x$ is a nonlinear term.
Obviously $(x,\beta,y)=(0,0,0)$ is a fixed point with eigenvalues of $0$ and $\mu$. The corresponding eigenvectors are
\begin{align*}
  v_{1}=(1,0)^{T} , ~v_{2}=(1,\mu)^{T},
\end{align*}
by invertible linear transformation
$$X\rightarrow TX,$$
where $X=(x,y)^{T}$,~~$T=(v_{1},v_{2}),$
system\eqref{2.2.1.1} into
\begin{equation}\label{2.2.1.2}
 \begin{array}{l}
 \left(
   \begin{array}{c}
     \dot{x} \\
     \dot{y} \\
   \end{array}
 \right)=
 \left(
   \begin{array}{cc}
     0 & 0 \\
     0 & \mu \\
   \end{array}
 \right)
 \left(
   \begin{array}{c}
     x \\
     y \\
   \end{array}
 \right)+
 \left(
   \begin{array}{c}
     -\frac{\beta}{\mu} x+\frac{\epsilon}{\mu}x^3+yF_{1}(x,y), \\
     \frac{\beta}{\mu} x-\frac{\epsilon}{\mu}x^3+yF_{2}(x,y) \\
   \end{array}
 \right),\\

   ~~~\dot{\beta}~~=~~0,
 \end{array}
\end{equation}
where $F_{1}(x,y),~F_{2}(x,y)$ are function of $x,~y$.
For sufficiently small $x$,~$\beta$ has
\begin{align*}
  W^{c}(0)=\{(x,y,\beta)\in\mathbb{R}^{3}:y=h(x,\beta),h(0,0)=0,Dh(0,0)=0\}.
\end{align*}
Can calculate the center manifold is $y=h(x,\beta)=0$.

So system\eqref{2.2.1.2} restrictions on the center manifold is
\begin{align}\label{2.2.1.4}
  \begin{split}
    \dot{x}&=-\frac{\beta}{\mu}x+\frac{\epsilon}{\mu}x^3, \\
    \dot{\beta}&=0.\
  \end{split}
\end{align}
Make $\bar{\beta}=-\frac{\beta}{\mu}$, then the system\eqref{2.2.1.4} to write to
\begin{align*}
  \begin{split}
    \dot{x}&=\bar{\beta}x+\frac{\epsilon}{\mu}x^3, \\
    \dot{\bar{\beta}}&=0.\
  \end{split}
\end{align*}
Since $\beta\geq0$, $\epsilon>0$, when $\mu>0$, there is $\frac{\epsilon}{\mu}>0$, then the system\eqref{1.2.3} has a subcritical pitchfork bifurcation; when $\mu<0$, $\frac{\epsilon}{\mu}<0$, this system \eqref{1.2.3} there is supercritical pitchfork bifurcation.

\end{proof}

\begin{proposition}\label{p2}
If $\epsilon>0$, $\beta>0$ and $\mu_{1}<\mu<\mu_{2}$, the equilibrium $E_{1}$ (or $E_{2}$) is a focus of the system\eqref{1.2.3}. When $\mu>\mu_{c}$ the system\eqref{1.2.3} yields a stable limit cycle in a small area of the equilibrium $E_{1}$(or $E_{2}$).
\end{proposition}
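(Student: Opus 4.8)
Since \eqref{1.2.3} is invariant under $(x,y)\mapsto(-x,-y)$, the equilibria $E_{1}$ and $E_{2}$ are mapped onto each other, so it suffices to argue for $E_{2}=(x_{2},0)$, $x_{2}=\sqrt{\beta/\epsilon}$. The first assertion is already contained in Lemma~\ref{L1}: when $\epsilon>0$, $\beta>0$ and $\mu_{1}<\mu<\mu_{2}$ one has $\Delta=(\mu+\tfrac{\beta}{\epsilon}-\tfrac{\beta^{2}}{\epsilon^{2}})^{2}-8\beta<0$, so the Jacobian $B$ at $E_{2}$ has a complex-conjugate pair of eigenvalues and $E_{2}$ is a focus. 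The real content is the birth of a \emph{stable} limit cycle, which I would prove as a supercritical Hopf bifurcation at $\mu=\mu_{c}$.

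The plan has four steps: translate and expand, check eigenvalue transversality, compute the first Lyapunov coefficient, and invoke the Hopf theorem. For the first step, put $u=x-x_{2}$, $v=y$; using $\beta x_{2}-\epsilon x_{2}^{3}=0$, $x_{2}^{2}=\beta/\epsilon$ and $\mu_{c}=\tfrac{\beta^{2}}{\epsilon^{2}}-\tfrac{\beta}{\epsilon}$, system \eqref{1.2.3} becomes
\[
\dot u=v,\qquad \dot v=-2\beta u+f(u)\,v-3\epsilon x_{2}u^{2}-\epsilon u^{3},
\]
with $f(u)=(\mu-\mu_{c})+(2x_{2}-4x_{2}^{3})u+(1-6x_{2}^{2})u^{2}-4x_{2}u^{3}-u^{4}$. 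The Jacobian at $u=v=0$ has trace $\mu-\mu_{c}$ and determinant $2\beta>0$; hence at $\mu=\mu_{c}$ its eigenvalues are $\pm i\omega_{0}$ with $\omega_{0}=\sqrt{2\beta}$, and $\tfrac{d}{d\mu}\Re\lambda(\mu)\big|_{\mu=\mu_{c}}=\tfrac12\neq0$, which is the transversality condition and also shows that $E_{2}$ becomes an unstable focus just past $\mu_{c}$.

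For the Lyapunov coefficient, at $\mu=\mu_{c}$ rescale $u=X$, $v=-\omega_{0}Y$ to reach the standard position $\dot X=-\omega_{0}Y$, $\dot Y=\omega_{0}X+G(X,Y)$, where $G(X,Y)=\tfrac{3\epsilon x_{2}}{\omega_{0}}X^{2}+f'(0)XY+\tfrac{\epsilon}{\omega_{0}}X^{3}+\tfrac12 f''(0)X^{2}Y+\cdots$ and the first component carries no nonlinear part. The Guckenheimer--Holmes formula then gives, since all derivatives of the vanishing first component drop out,
\[
a=\tfrac1{16}\bigl(G_{XXY}+G_{YYY}\bigr)-\tfrac1{16\omega_{0}}G_{XY}\bigl(G_{XX}+G_{YY}\bigr)
=\tfrac1{16}\Bigl(f''(0)-\tfrac{3\epsilon x_{2}f'(0)}{\beta}\Bigr);
\]
substituting $f'(0)=2x_{2}(1-2\beta/\epsilon)$, $f''(0)=2(1-6\beta/\epsilon)$ and $x_{2}^{2}=\beta/\epsilon$ yields $a=\tfrac1{16}\bigl(2(1-6\beta/\epsilon)-6(1-2\beta/\epsilon)\bigr)=-\tfrac14<0$. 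Since $a<0$ and the eigenvalue pair crosses the imaginary axis from left to right as $\mu$ increases, the Hopf bifurcation is supercritical, so for $\mu>\mu_{c}$ close to $\mu_{c}$ (hence still inside $(\mu_{1},\mu_{2})$) there is a unique small-amplitude, orbitally asymptotically stable periodic orbit around $E_{2}$; by the symmetry $(x,y)\mapsto(-x,-y)$ the same holds around $E_{1}$.

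I expect the bookkeeping in the Lyapunov-coefficient step to be the only real obstacle: carrying all quadratic and cubic coefficients correctly through the translation and the $v\mapsto-\omega_{0}Y$ rescaling, plugging them into the formula, and using $x_{2}^{2}=\beta/\epsilon$, $\omega_{0}^{2}=2\beta$ to collapse the expression to $-\tfrac14$. One should also check that the normalization of the cited formula is the one for which $a<0$ together with a left-to-right eigenvalue crossing produces a stable cycle for parameters on the far side of $\mu_{c}$, so that the sign of $\tfrac{d}{d\mu}\Re\lambda$ is used consistently.
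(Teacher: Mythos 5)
Your proposal is correct: the computation of $f'(0)=2x_{2}(1-2\beta/\epsilon)$, $f''(0)=2(1-6\beta/\epsilon)$, the transversality value $\tfrac12$, and the Guckenheimer--Holmes coefficient $16a=f''(0)-\tfrac{3\epsilon x_{2}f'(0)}{\beta}=-4$ all check out, and $a<0$ together with the left-to-right crossing does give a supercritical Hopf bifurcation and a stable small cycle for $\mu>\mu_{c}$, with the $\mathbb{Z}_{2}$-symmetry transferring the conclusion to $E_{1}$. Conceptually this is the same argument as the paper's (first Lyapunov coefficient negative plus transversality), but the route differs: the paper passes to the complex eigenbasis, computes the coefficients $g_{kl}$, performs the near-identity transformations \eqref{2.2.7} and \eqref{2.2.8} to reach the Poincar\'e normal form $\dot z=(\delta+{\rm i}\omega)z+c_{1}z^{2}\bar z+O(|z|^{4})$ with $\Re c_{1}(\mu_{c})<0$, and then normalizes to polar coordinates, whereas you apply the real $16a$ formula directly after the elementary rescaling $v=-\omega_{0}Y$. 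Your version is shorter and avoids the normal-form bookkeeping; the paper's longer derivation is not wasted, however, because the same sequence of transformations and the explicit $c_{1}(\xi)$ are reused in Section 4 to put the periodically forced system \eqref{1.2.2} into the form \eqref{3.2.9} required by the KAM theorem, which a bare Lyapunov-coefficient computation would not provide. The numerical discrepancy between your $a=-\tfrac14$ and the paper's $c_{1}(\mu_{c})=-\tfrac{1}{2\beta}$ is only a matter of normalization of the complex coordinate (rescaling $z$ multiplies $c_{1}$ by a positive factor), so the signs, which are what the Hopf theorem uses, agree.
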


\begin{proof}
Since Norm-form is required later in the analysis of the existence of quasi-periodic solutions for periodic forced systems, here we derive the standard form of Hopf bifurcation.
From Lemma\eqref{L1} , we can see that in the parameter plane $$H=\{(\mu,\beta):\mu=\frac{-\epsilon\beta+\beta^2}{\epsilon^2},\epsilon>0,\beta>0\}$$. The equilibrium of the system\eqref{1.2.3} $E_{1}$ (or $E_{2}$) has a pair of pure virtual eigenvalues, $\lambda_{1,2}=\pm{\rm i}\sqrt{2\beta}$. Since the equilibrium $E_{1}$ and $E_{2}$ are symmetric about the axis of $y$, only one equilibrium $E_{2}$ can be discussed.

First through coordinate transformation
\begin{equation}\label{2.2.3}
  \left\{
  \begin{array}{l}
          x\rightarrow\sqrt{\frac{\beta}{\epsilon}}+x,\\
           y\rightarrow y,
        \end{array}
  \right.
\end{equation}
equilibrium $E_{2}$ move to the origin, get it
\begin{equation}\label{2.2.4}
\left\{
\begin{array}{l}
   \dot{x}=y; \\
  \dot{y}=-2\beta x+(\mu+\frac{\beta}{\epsilon}
  -\frac{\beta^2}{\epsilon^2})y-3\epsilon\sqrt{\frac{\beta}{\epsilon}}x^2+2\sqrt{\frac{\beta}{\epsilon}}(1-2\frac{\beta}{\epsilon})xy\\
        \qquad      -\epsilon x^3+(1-6\frac{\beta}{\epsilon})x^2y-4\sqrt{\frac{\beta}{\epsilon}}x^3y-x^4y.
\end{array}
\right.
\end{equation}
Make
$$\sigma(\mu)=\mu+\frac{\beta}{\epsilon}-\frac{\beta^2}{\epsilon^2},~\zeta(\mu)=2\beta,~\sigma(\mu_{c})=0,~\zeta(\mu_{c})=\omega^2_{0}=2\beta.$$
Let $\lambda(\mu)=\delta(\mu)+{\rm i}\omega(\mu)$ be the eigenvalue of $E_{2}$ and $\bar{\lambda}(\mu)$ be the other eigenvalue. For small $|\mu|$, we have
$$\delta(\mu)=\frac{1}{2}\sigma(\mu),~\omega(\mu)=\frac{1}{2}\sqrt{4\zeta(\mu)-\sigma^2(\mu)},$$
and $\delta(\mu_{c})=0, ~\omega(\mu_{c})=\omega_{0}=\sqrt{2\beta}>0.$
Eigenvalue $\lambda, ~\bar{\lambda}$ corresponding eigenvector is $v_{1}, ~v_{2}$, where
$$v_{1}=(\frac{\sigma(\mu)-{\rm i}\sqrt{4\zeta(\mu)-\sigma^2(\mu)}}{4\beta},1)^{T}=(\frac{\bar{\lambda}}{2\beta},1)^{T},$$
$$v_{2}=(\frac{\sigma(\mu)+{\rm i}\sqrt{4\zeta(\mu)-\sigma^2(\mu)}}{4\beta},1)^{T}=(\frac{\lambda}{2\beta},1)^{T}.$$
Take the non-degenerate matrix
\begin{equation*}
  T=(v_{1},v_{2})^{T},
\end{equation*}
and take a linear transformation
\begin{equation}\label{2.2.5}
  X=TZ,
\end{equation}
where $X=(x,y)^{T}$, $Z=(z,\bar{z})^{T}$ and $z$ is conjugated with $\bar{z}$. Then the system\eqref {2.2.4} can be turned into
\begin{equation}\label{2.2.12}
  \left(
  \begin{array}{cc}
     \dot{z}  \\
     \dot{\bar{z}}
   \end{array}
   \right)
   = \left(
  \begin{array}{cc}
     \lambda & 0  \\
     0 & \bar{\lambda}
   \end{array}
   \right)
   \left(
  \begin{array}{cc}
     z  \\
     \bar{z}
   \end{array}
   \right)+
   \left(
  \begin{array}{cc}
     N_{1}(Z)  \\
     N_{2}(Z)
   \end{array}
   \right),
\end{equation}
where
\begin{align*}
  N_{1}(Z)=\sum\limits_{2\leq k+l\leq3}g_{kl}z^{k}\bar{z}^{l}+O(|z|^4),  \\
  N_{2}(Z)=\sum\limits_{2\leq k+l\leq3}\bar{g}_{kl}z^{l}\bar{z}^{k}+O(|z|^4),
\end{align*}
\begin{align*}
  &g_{20}=-\frac{(\frac{\beta}{\epsilon})^{\frac{3}{2}}\lambda\bar{\lambda}(8\beta^2-4\beta\epsilon+3\epsilon^2\bar{\lambda})}{4\beta^3(\lambda-\bar{\lambda})},\\
  &g_{11}=-\frac{(\frac{\beta}{\epsilon})^{\frac{3}{2}}\lambda(3\epsilon^2\lambda\bar{\lambda}+4\beta^2(\lambda+\bar{\lambda})-2\beta\epsilon(\lambda+\bar{\lambda}))}{2\beta^3(\lambda-\bar{\lambda})},\\
  &g_{02}=-\frac{(\frac{\beta}{\epsilon})^{\frac{3}{2}}\lambda^2(8\beta^2-4\beta\epsilon+3\epsilon^2\lambda)}{4\beta^3(\lambda-\bar{\lambda})},\\
  &g_{30}=-\frac{\lambda\bar{\lambda}^2(12\beta^2-2\beta\epsilon+\epsilon^2\bar{\lambda})}{8\beta^3\epsilon(\lambda-\bar{\lambda})},\\
  &g_{21}=-\frac{\lambda\bar{\lambda}(3\epsilon^2\lambda\bar{\lambda}+12\beta6(2\lambda+\bar{\lambda})-2\beta\epsilon(\bar{\lambda}+2\lambda))}{8\beta^3\epsilon(\lambda-\bar{\lambda})},\\
  &g_{12}=-\frac{\lambda^2(3\epsilon^2\lambda\bar{\lambda}+12\beta6(\lambda+2\bar{\lambda})-2\beta\epsilon(2\bar{\lambda}+\lambda))}{8\beta^3\epsilon(\lambda-\bar{\lambda})},\\
  &g_{03}=-\frac{\lambda^3(12\beta^2-2\beta\epsilon+\epsilon^2\lambda)}{8\beta^3\epsilon(\lambda-\bar{\lambda})},
\end{align*}
$\bar{g}_{kl}$ represents the complex conjugation of $g_{kl}$.

Because system\eqref{2.2.12} in the first equation and the second equation is conjugate relation, so we just need to discuss the first equation, namely
\begin{align}\label{2.2.6}
  \dot{z}=\lambda z+\sum\limits_{2\leq k+l\leq3}g_{kl}z^{k}\bar{z}^{l}+O(|z|^4),
\end{align}

Next, the Hopf bifurcation of the system is analyzed by the method of finding the Nomal-Form, and the quadratic term in system\eqref{2.2.6} is eliminated by the transformation
\begin{align}\label{2.2.7}
  z\rightarrow z+\frac{h_{20}}{2}z^2+h_{11}z\bar{z}+\frac{h_{02}}{2}\bar{z}^2,
\end{align}
where $h_{20}=\frac{g_{20}}{\lambda}$, $h_{11}=\frac{g_{11}}{\bar{\lambda}}$, $h_{02}=\frac{g_{02}}{2\bar{\lambda}-\lambda}$.
The transformed equation is
\begin{equation*}
  \dot{z}=\lambda z+\frac{k_{30}}{6}z^3+\frac{k_{21}}{2}z^2\bar{z}+\frac{k_{12}}{2}z\bar{z}^2+\frac{k_{03}}{6}\bar{z}^3+\cdots\cdots
\end{equation*}
where
\begin{align*}
 &k_{30}=g_{30}+\frac{3g^2_{20}}{\lambda}+\frac{3g_{11}\bar{g_{02}}}{2\lambda-\bar{\lambda}},\\
 &k_{21}=g_{21}+\frac{g_{02}\bar{g_{02}}}{2\lambda-\bar{\lambda}}+\frac{2g_{11}\bar{g_{11}}}{\lambda}+\frac{g_{11}g_{20}(2\lambda+\bar{\lambda})}{\lambda\bar{\lambda}},\\
 &k_{12}=g_{12}+\frac{2g_{02}\bar{g_{11}}}{\lambda}+\frac{2g_{11}^2}{\bar{\lambda}}+\frac{g_{11}\bar{g_{20}}}{\bar{\lambda}}-\frac{g_{02}g_{20}}{\lambda-2\bar{\lambda}},\\
 &k_{03}=g_{03}+\frac{3g_{02}\bar{g_{20}}}{\bar{\lambda}}-\frac{3g_{11}g_{02}}{\lambda-2\bar{\lambda}},
\end{align*}
a further transformation is made to eliminate the cubic off-resonance term, and the transformation is given by
\begin{align}\label{2.2.8}
  z\rightarrow z+\frac{h_{30}}{6}z^3+\frac{h_{12}}{6}z\bar{z}^2+\frac{h_{03}}{2}\bar{z}^3,
\end{align}
where $h_{30}=\frac{g_{30}}{2\lambda}$, $h_{12}=\frac{g_{12}}{2\bar{\lambda}}$, $h_{03}=\frac{g_{03}}{3\bar{\lambda}-\lambda}$.
Substitute it into the system, we have
\begin{align}\label{2.2.9}
  \dot{z}=(\delta(\mu)+{\rm i}\omega(\mu))z+c_{1}z^2\bar{z}+O(|z|^4),
\end{align}
where $c_{1}$ is the function related to the parameter $\mu$,
\begin{align*}
  c_{1}=\frac{g_{21}}{2}+\frac{g_{02}\bar{g}_{02}}{2(2\lambda-\bar{\lambda})} +\frac{g_{11}\bar{g}_{11}}{\lambda}+\frac{g_{11}g_{20}(2\lambda+\bar{\lambda})}{2\lambda\bar{\lambda}}.
\end{align*}
So it can be calculated at $\mu=\mu_{c}$
\begin{align*}
  c_{1}(\mu_{c})=-\frac{1}{2\beta}<0.
\end{align*}
Also because $\delta(\mu)=\frac{1}{2}(\mu+\frac{\beta}{\epsilon}-\frac{\beta^2}{\epsilon^2})$, so
\begin{align*}
  \frac{d\delta(\mu)}{d\mu}\mid_{\mu=\mu_{c}}=\frac{1}{2}>0.
\end{align*}
This shows that both the nondegeneracy and transversality conditions are satisfied, so there is a coordinate transformation and a time transformation
\begin{align*}
  z=\frac{z}{\sqrt{|l_{1}(\alpha)|}}, ~dt=(1+e_{1}(\alpha)|z|^2)\omega(\mu)dt,
\end{align*}
where
\begin{align*}
  \alpha=\alpha(\mu)=\frac{\delta(\mu)}{\omega(\mu)}, ~ e_{1}(\alpha)=\Im\frac{c_{1}(\mu(\alpha))}{\omega(\mu(\alpha))}, ~ l_{1}(\alpha)=\Re\frac{c_{1}(\mu(\alpha))}{\omega(\mu(\alpha))}-\alpha ~ e_{1}(\alpha),
\end{align*}
and
\begin{align*}
  l_{1}(0)=\frac{\Re c_{1}(\mu_{c})}{\omega(\mu_{c})}.
\end{align*}
Normalized the system\eqref{2.2.9}
\begin{align}\label{2.2.10}
  \dot{z}=(\alpha+{\rm i})z-z^2\bar{z}+O(|z|^4).
\end{align}
Substitute $z=\rho e^{{\rm i}\varphi}$ into the equation\eqref{2.2.10} and separate the real and imaginary parts to get the system in polar coordinates
\begin{equation}\label{2.2.11}
  \left\{\begin{array}{l}
             \dot{\rho}=\rho(\alpha-\rho^2), \\
             \dot{\varphi}=1.
        \end{array}
  \right.
\end{equation}

Can be seen when $\alpha\leq0 $, or $\mu\leq\frac{\beta^2}{\epsilon^2}-\frac {\beta}{\epsilon}$, the system\eqref{2.2.11} is only an equilibrium solution $\rho=0 $. When $\alpha>0$, that is, $\mu>\frac{\beta^2}{\epsilon^2}-\frac{\beta}{\epsilon}$, the system has two equilibrium solutions: $\rho=0$ and $\rho=\sqrt{\alpha}$. However, $\rho=\sqrt{\alpha}$ is a limit cycle, and since $c_{1}(\mu_{c})<0$, the first Lyapunov coefficient $l_{1}<0$, it can be known that the limit cycle is stable by Hopf bifurcation theory\cite{L9}.
\end{proof}

\begin{lemma}\label{L3}
If $\epsilon>0$, $\beta>0$ and $\mu=\mu_{3}$, there are three equilibria in the system\eqref{1.2.3}, where the origin $O(0,0)$ is a saddle, and $E_{1}$ , $E_{2}$ are unstable foci. In this case, there are two homoclinic orbits that co-reside at the saddle.
\end{lemma}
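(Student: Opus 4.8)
The plan is to view \eqref{1.2.3} as a perturbation of the integrable Li\'enard system obtained by switching off the damping term, to read off the geometry of the unperturbed figure-eight, and to pin down the reconnection value of $\mu$ by a Melnikov computation, using the $\mathbb{Z}_{2}$-symmetry to obtain both homoclinic loops simultaneously. First I would fix the unperturbed picture: dropping the damping $(\mu+x^{2}-x^{4})y$ from \eqref{1.2.3} leaves the Hamiltonian system $\dot x=y,\ \dot y=\beta x-\epsilon x^{3}$ with first integral $H(x,y)=\tfrac12 y^{2}-\tfrac{\beta}{2}x^{2}+\tfrac{\epsilon}{4}x^{4}$. Since $\beta,\epsilon>0$, the origin sits on the level $H=0$ as a saddle while $E_{1,2}=(\pm\sqrt{\beta/\epsilon},0)$ are centers, and $\{H=0\}$ is a figure-eight consisting of two homoclinic loops $\Gamma^{\pm}$ at $O$, with $\Gamma^{+}\subset\{x\ge 0\}$ enclosing $E_{2}$ and $\Gamma^{-}=-\Gamma^{+}\subset\{x\le 0\}$ enclosing $E_{1}$; on each loop $y^{2}=\beta x^{2}-\tfrac{\epsilon}{2}x^{4}$ with $0\le x^{2}\le 2\beta/\epsilon$. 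The equivariance $(x,y)\mapsto(-x,-y)$ of \eqref{1.2.3} swaps $\Gamma^{+}$ and $\Gamma^{-}$, so it suffices to analyse $\Gamma^{+}$.

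The statement about the equilibria is then immediate from Lemma~\ref{L1}: when $\beta,\epsilon>0$ the origin is a saddle for every $\mu$, and $E_{1,2}$ are unstable foci precisely for $\mu_{c}<\mu<\mu_{2}$. Evaluating at $\mu=\mu_{3}$ one checks $\mu_{3}-\mu_{c}=\tfrac{\beta(7\epsilon-3\beta)}{35\epsilon^{2}}$ and $\mu_{2}-\mu_{3}=2\sqrt{2\beta}-\tfrac{\beta(7\epsilon-3\beta)}{35\epsilon^{2}}$, both positive in the parameter range under consideration (in particular whenever $3\beta<7\epsilon$, which holds e.g. for the numerical values used later), so $E_{1,2}$ are unstable foci at $\mu=\mu_{3}$.

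Next comes the Melnikov computation for the splitting of $W^{s}(O)$ and $W^{u}(O)$ along $\Gamma^{+}$. Along the flow of \eqref{1.2.3} one has $\dot H=(\mu+x^{2}-x^{4})y^{2}$, so the first-order splitting is measured by $M(\mu)=\oint_{\Gamma^{+}}(\mu+x^{2}-x^{4})\,y\,dx$; on the loop $y\,dx=y^{2}\,dt$ and $y^{2}=x^{2}\bigl(\beta-\tfrac{\epsilon}{2}x^{2}\bigr)$, which after the substitution $u=x^{2}$ turns $M(\mu)$ into a constant multiple of $\int_{0}^{2\beta/\epsilon}(\mu+u-u^{2})\sqrt{\beta-\tfrac{\epsilon}{2}u}\,du$. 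Each moment $\int_{0}^{2\beta/\epsilon}u^{k}\sqrt{\beta-\tfrac{\epsilon}{2}u}\,du$ is a Beta integral and equals $\tfrac{4\beta^{3/2}}{3\epsilon},\ \tfrac{16\beta^{5/2}}{15\epsilon^{2}},\ \tfrac{128\beta^{7/2}}{105\epsilon^{3}}$ for $k=0,1,2$, and assembling the three terms gives $M(\mu)=\tfrac{4\beta^{3/2}}{3\epsilon}\bigl(\mu+\tfrac{4\beta}{5\epsilon}-\tfrac{32\beta^{2}}{35\epsilon^{2}}\bigr)=\tfrac{4\beta^{3/2}}{3\epsilon}(\mu-\mu_{3})$, which is affine in $\mu$ with nonzero slope and vanishes exactly at $\mu=\mu_{3}$, changing sign there.

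Finally, to upgrade $M(\mu_{3})=0$ to a genuine pair of homoclinic orbits I would use that \eqref{1.2.3} is a one-parameter family of rotated vector fields with respect to $\mu$ (in the generalized sense): for the vector field $X_{\mu}$ one has $X_{\mu}\wedge\partial_{\mu}X_{\mu}=y^{2}\ge 0$, vanishing only on $\{y=0\}$, so the stable and unstable separatrices of $O$ vary monotonically with $\mu$; combined with Proposition~\ref{p2} (a stable cycle is born at $E_{2}$ as $\mu$ crosses $\mu_{c}$ and expands with $\mu$) this forces a unique value of $\mu$ at which $W^{u}(O)$ and $W^{s}(O)$ reconnect, and the Melnikov formula identifies that value as $\mu_{3}$; the $\mathbb{Z}_{2}$-symmetry then closes the second loop at the same $\mu$. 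Equivalently, the substitution $x=\sqrt{\beta/\epsilon}\,\xi$ together with the time rescaling $t\mapsto\sqrt{\beta}\,t$ rewrites \eqref{1.2.3} as $\xi'=\eta,\ \eta'=\xi-\xi^{3}+\tfrac{1}{\sqrt{\beta}}\bigl(\mu+\tfrac{\beta}{\epsilon}\xi^{2}-\tfrac{\beta^{2}}{\epsilon^{2}}\xi^{4}\bigr)\eta$, whose damping coefficient is $O(\sqrt{\beta})$ for $\beta$ small, so that standard near-Hamiltonian Melnikov theory and the implicit function theorem yield the connection directly in that regime. I expect this last step — turning the first-order condition $M=0$ into an actual connection valid along the whole curve $P$, rather than only in the near-integrable limit — to be the main obstacle; the Melnikov integral itself and the classification of the equilibria are routine.
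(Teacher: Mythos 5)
Your proposal follows the same core route as the paper: both treat \eqref{1.2.3} as a perturbation of the Hamiltonian system $\dot x=y$, $\dot y=\beta x-\epsilon x^{3}$ with $H=\tfrac12 y^{2}-\tfrac{\beta}{2}x^{2}+\tfrac{\epsilon}{4}x^{4}$, whose zero level is the figure-eight at $O$, and both locate the reconnection by a Melnikov computation that yields the same curve $\mu_{3}=\tfrac{32\beta^{2}}{35\epsilon^{2}}-\tfrac{4\beta}{5\epsilon}$. The differences are in execution and in what you add. The paper first rescales $x\rightarrow\epsilon_{1}x$, $y\rightarrow\epsilon_{1}^{2}y$, $\beta\rightarrow\epsilon_{1}^{2}\beta$, $\mu\rightarrow\epsilon_{1}^{2}\mu$, $t\rightarrow t/\epsilon_{1}$ so that the damping appears with the explicit small factor $\epsilon_{1}$, parametrizes the unperturbed loops by $x=\pm\sqrt{2\beta/\epsilon}\,\mathrm{sech}\,t$, $y=\mp\sqrt{2}\beta\epsilon^{-1/2}\mathrm{sech}\,t\tanh t$, evaluates $M=\int_{-\infty}^{\infty}y^{2}(\mu+x^{2}-\epsilon_{1}^{2}x^{4})\,dt$ (keeping the formally higher-order quintic term), and then undoes the scaling; you instead compute $\oint_{\Gamma^{+}}(\mu+x^{2}-x^{4})y\,dx$ directly as an Abelian integral via $u=x^{2}$ and Beta-function moments, which gives the identical answer $M=\tfrac{4\beta^{3/2}}{3\epsilon}(\mu-\mu_{3})$ with less work. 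You also do two things the paper's proof omits: you actually verify the equilibrium claim, correctly observing that $\mu_{c}<\mu_{3}<\mu_{2}$ (hence ``unstable foci'') needs $\mu_{3}-\mu_{c}=\tfrac{\beta(7\epsilon-3\beta)}{35\epsilon^{2}}>0$, i.e.\ a restriction like $3\beta<7\epsilon$ that the lemma states unconditionally; and you confront the persistence question, proposing the generalized rotated-vector-field monotonicity $X_{\mu}\wedge\partial_{\mu}X_{\mu}=y^{2}\geq0$ to get a unique reconnection value and the near-Hamiltonian scaling plus implicit function theorem to realize it. On that last point your candor is warranted, but it is not a defect relative to the paper: the paper's own argument is exactly a first-order Melnikov condition (with the quintic term retained beyond first order and the scaling then inverted), so it too establishes the connection only in the perturbative regime and identifies $\mu_{3}$ only to leading order; your rotated-vector-field supplement, if carried out, would actually strengthen the uniqueness part beyond what the paper proves.
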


\begin{proof}
In order to more conveniently study the homologous orbital bifurcation, we introduce the following transformation:
\begin{align}\label{2.2.3.1}
  x\rightarrow\epsilon_{1}x,~y\rightarrow\epsilon^2_{1}y,~\beta\rightarrow\epsilon^2_{1}\beta,~  \mu\rightarrow\epsilon^2_{1}\mu,~t\rightarrow\frac{1}{\epsilon_{1}}t.
\end{align}
The system\eqref{1.2.3} can be converted to
\begin{equation}\label{2.2.3.2}
\left\{\begin{array}{l}
             \dot{x}=y, \\
             \dot{y}=\beta x-\epsilon x^3+\epsilon_{1}(\mu y+x^2y-\epsilon^2_{1}x^4y).
        \end{array}
  \right.
\end{equation}
When $\epsilon_{1}=0$, the system\eqref{2.2.3.2} is
\begin{equation*}
  \left\{\begin{array}{l}
             \dot{x}=y, \\
             \dot{y}=\beta x-\epsilon x^3,
        \end{array}
  \right.
\end{equation*}
this is a Hamilton system. Its energy function is
\begin{align*}
  H(x,y)=\frac{1}{2}y^2-\frac{\beta}{2}x^2+\frac{\epsilon}{4}x^4=h.
\end{align*}
when $h=0$, there is a pair of homologous tracks, denoted as $\Gamma^{0}=\{\Gamma^{0}_{-}(t)|t\in R\}\bigcup\{O\}\bigcup\{\Gamma^{0}_{+}(t)|t\in R\}$, It consists of a hyperbolic saddle $O(0,0)$ and two homoclinic orbital lines homoclinic at the saddle $\{\Gamma^{0}_{-}(t)|t\in R\}$, $\{\Gamma^{0}_{+}(t)|t\in R\}$. Make
\begin{align*}
  x(t) =& l_{1}\mathrm{sech}(t), \\
  y(t) =& l_{2}\mathrm{sech}(t)\tanh(t),
\end{align*}
by substituting the energy function equation $H(x,y)=0$, when $h=0$, the equation for the homoclinic orbits can be written as
\begin{equation*}
  \left\{\begin{array}{l}
             x(t)=\pm\sqrt{\frac{2\beta}{\epsilon}}\mathrm{sech}(t), \\
             y(t)=\mp\frac{\sqrt{2}\beta}{\sqrt{\epsilon}}\mathrm{sech}(t)\tanh(t).
        \end{array}
  \right.
\end{equation*}
Since the perturbation term of the system\eqref{2.2.3.2} is $\epsilon_{1}(\mu y+x^2y-\epsilon^2_{1}x^4y)$, and depends on the time $t$. So the Melnikov function can be written as:
\begin{align}\label{2.2.3.6}
  M(\beta, \epsilon, \mu)= & \int_{-\infty}^{\infty}y(t)(\mu y(t)+x^2(t)y(t)-\epsilon^2_{1}x^4(t)y(t))dt\notag \\
                         = & \int_{-\infty}^{\infty}y(t)^2(\mu +x^2(t)-\epsilon^2_{1}x^4(t))dt\notag\\
                         = & \int_{-\infty}^{\infty}\frac{2\beta^2}{\epsilon}\mathrm{sech}^2(t)\tanh^2(t)(\mu+\frac{2\beta}{\epsilon}\mathrm{sech}^2(2)- \frac{4\beta^2\epsilon^2_{1}}{\epsilon^2}\mathrm{sech}^4(t))dt\notag \\
                         = & \frac{4\beta^2\mu}{3\epsilon}+\frac{16\beta^3}{15\epsilon^2}-\frac{128\beta^4\epsilon^2_{1}}{105\epsilon^3}.
\end{align}
In order for the homoclinic orbits to survive in the disturbance, this requires $M(\beta, \epsilon, \mu)=0$. Therefore, from \eqref{2.2.3.6}, the bifurcation curve of the homoclinic orbits can be obtained:
\begin{align}\label{2.2.3.7}
  \mu=\frac{32\beta^2\epsilon^2_{1}}{35\epsilon^2}-\frac{4\beta}{5\epsilon}.
\end{align}
Then substituting the inverse transformation of the parameter transformation in \eqref{2.2.3.1} $\beta\rightarrow\frac{\beta}{\epsilon_{1}^2}$, $\mu\rightarrow\frac{\mu}{\epsilon_{1}^2}$ into \eqref{2.2.3.7}, the homoclinic orbit bifurcation curve is obtained as follows:
\begin{align*}
  \mu=\frac{32\beta}{35\epsilon^2}-\frac{4\beta}{5\epsilon}\triangleq\mu_{3},
\end{align*}
That is, when $\mu=\mu_{3}$, the system\eqref{1.2.3} generated a homoclinic orbit.
\end{proof}

In this way, we have obtained all the bifurcations of the autonomous system \eqref{1.2.3} and the corresponding bifurcation curves. In order to prove the Theorem \ref{T2.2.0}, it is necessary to discuss the system\eqref{1.2.3} when the parameter lies in the range. The problem of large limit rings exists when \normalsize{\textcircled{\scriptsize{1}}}. Before analyzing large limit cycles, it is necessary to obtain the stability of the equilibrium at infinity of the system\eqref{1.2.3}.

\section{Limit cycles and homoclinic loops}
\label{chapter3}
\def\theequation{3.\arabic{equation}}
\setcounter{equation}{0}
\par
In this section, we will analyze the behavior of the system \eqref{1.2.3} at infinity and the existence and non-existence of large limit cycles, and give a representative global phase diagram. First, for equilibria at infinity of the unforced system\eqref{1.2.3}, we conclude as follows.
\begin{lemma}\label{L4}
System\eqref{1.2.3} in Poincar\'{e} the disc has four infinity equilibria $\bar {B} (1, 0) $, $\bar {\bar {B}} (1, 0) $, $\bar (0, 1) ${C}, $\bar {\bar {C}} (0, 1) $, where $\bar {C} $ and $\bar {\bar {C}}$ are unstable nodes; when $\epsilon \leq0 $, $\bar {B} $ and $\bar {\bar {B}}$ are stable nodes; when $\epsilon>0$, $\bar{B}$ and $\bar{\bar{B}}$ are saddle.
\end{lemma}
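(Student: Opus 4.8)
The plan is to push the compactified field of system \eqref{1.2.3} onto the Poincar\'{e} disc and analyse it at the equator in the standard charts. Writing \eqref{1.2.3} as $\dot x=P$, $\dot y=Q$ with $P=y$ and $Q=(\mu+x^{2}-x^{4})y+\beta x-\epsilon x^{3}$, the field has degree $5$ and its degree-$5$ homogeneous parts are $P_{5}\equiv 0$, $Q_{5}=-x^{4}y$; hence the directions at infinity carrying an equilibrium are the zeros on the unit circle of $xQ_{5}-yP_{5}=-x^{5}y$, i.e. only the $x$- and $y$-axes. This gives exactly the four points $\bar B,\bar{\bar B}$ (ends of the $x$-axis) and $\bar C,\bar{\bar C}$ (ends of the $y$-axis). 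Since \eqref{1.2.3} is invariant under $(x,y)\mapsto(-x,-y)$, which is the antipodal map of the disc, $\bar{\bar B}$ has the type of $\bar B$ and $\bar{\bar C}$ that of $\bar C$, so it suffices to treat $\bar B$ and $\bar C$.

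To handle $\bar C$ (and hence $\bar{\bar C}$) I would work in the chart $U_{2}$ of the Poincar\'{e} compactification, $x=u/v$, $y=1/v$, in which the compactified (polynomial) system is
\begin{align*}
  \dot u&=u^{5}+v^{4}-u^{3}v^{2}+\epsilon u^{4}v^{2}-\mu uv^{4}-\beta u^{2}v^{4},\\
  \dot v&=u^{4}v-u^{2}v^{3}+\epsilon u^{3}v^{3}-\mu v^{5}-\beta uv^{5},
\end{align*}
so $\bar C$ is the origin and its linear part is the zero matrix. The lowest-order part $(u^{5}+v^{4})\partial_{u}+u^{4}v\,\partial_{v}$ is quasi-homogeneous with weights $(4,5)$, so I would desingularise by the weighted blow-up $u=\rho^{4}a$, $v=\rho^{5}$ ($\rho\ge 0$, $a\in\mathbb{R}$), which together with the invariant line $\{v=0\}$ (on which $\dot u=u^{5}$) covers a neighbourhood of $\bar C$ in the disc. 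After cancelling the factor $\rho^{16}$ one gets $\dot a=\frac{1}{5}a^{5}+1+O(\rho^{2})$, $\dot\rho=\frac{1}{5}\rho\,a^{4}+O(\rho^{3})$; on the divisor $\rho=0$ the unique equilibrium $a=-5^{1/5}$ is hyperbolic with both eigenvalues ($5^{4/5}$ and $5^{-1/5}$) positive, and on $\{v=0\}$ the flow leaves the origin along the equator in both senses. Blowing down, every nearby orbit in the disc has $\bar C$ as $\alpha$-limit and no rotation occurs, so $\bar C$ (and $\bar{\bar C}$) is an unstable node; the parameters $\mu,\epsilon,\beta$ appear only in monomials strictly above the Newton edge and do not affect the type.

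For $\bar B$ (and hence $\bar{\bar B}$) I would pass to the chart $U_{1}$, $x=1/v$, $y=u/v$, where the compactified system is
\begin{align*}
  \dot u=-u+uv^{2}-\epsilon v^{2}+\beta v^{4}+\mu uv^{4}-u^{2}v^{4},\qquad \dot v=-uv^{5},
\end{align*}
so $\bar B$ is the origin with linearisation $\mathrm{diag}(-1,0)$, a semi-hyperbolic point. By the centre-manifold theorem the local centre manifold is a graph $u=h(v)=-\epsilon v^{2}+O(v^{4})$ and the reduced equation is $\dot v=-h(v)v^{5}=\epsilon v^{7}+O(v^{9})$ (when $\epsilon=0$, one order further gives $\dot v=-\beta v^{9}+O(v^{11})$ with $\beta>0$, since $\beta=\epsilon=0$ is excluded). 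Hence if $\epsilon>0$ the centre direction is repelling while the hyperbolic one is attracting, so $\bar B$ is a saddle; if $\epsilon\le 0$ both directions are attracting, so $\bar B$ (and $\bar{\bar B}$) is a stable node.

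The main obstacle is $\bar C,\bar{\bar C}$: the linear part is identically zero, so nothing can be read off a Jacobian and one must invoke the quasi-homogeneous blow-up (or an equivalent generalised-normal-sector / Briot--Bouquet analysis). One has to pick the correct weights from the Newton diagram, verify that the divisor equilibrium is already hyperbolic so that a single blow-up suffices, check that the parameter-bearing higher-order terms are irrelevant, and confirm after blowing down that the sectorial structure is genuinely nodal (no elliptic sectors). By comparison $\bar B,\bar{\bar B}$, being only semi-hyperbolic, are routine via the centre manifold.
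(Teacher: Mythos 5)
Your proposal is correct in its conclusions and, for the pair $\bar B,\bar{\bar B}$, essentially coincides with the paper: the paper works in the same chart $x=1/z$, $y=u/z$, rescales time by $z^{4}$, and classifies the resulting semi-hyperbolic point via the normal-form theorem for such points ([Zhang et al., Theorem 7.1 of Chapter 2]), which is exactly your centre-manifold reduction $u=h(v)=-\epsilon v^{2}+(\beta-\epsilon)v^{4}+\cdots$, $\dot v=-h(v)v^{5}$, including the degenerate case $\epsilon=0$ where the $\beta v^{9}$ term decides. The genuine difference is at $\bar C,\bar{\bar C}$: the paper attacks the fully degenerate point by the characteristic-direction equation in polar coordinates followed by a chain of four successive Briot--Bouquet transformations $z=z_{1}v,\ z_{1}=z_{2}v,\dots$ with time rescalings, stopping only when a hyperbolic saddle appears and then reading off that the original point is a node; you instead choose the weights $(4,5)$ from the Newton diagram of the leading part $(u^{5}+v^{4})\partial_{u}+u^{4}v\,\partial_{v}$ and resolve the singularity with a single quasi-homogeneous blow-up, landing immediately on a hyperbolic node $a=-5^{1/5}$ on the divisor (your eigenvalues $5^{4/5}$, $5^{-1/5}$ check out, and the parameter-carrying monomials are indeed strictly above the Newton edge). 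Your route is shorter and makes transparent why $\mu,\beta,\epsilon$ are irrelevant to the type, at the cost of having to justify the weighted blow-up machinery; the paper's iterated directional blow-ups are more pedestrian but each step is covered by the cited textbook theorem.

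One point you should make explicit when writing this up: the chart $u=\rho^{4}a$, $v=\rho^{5}$ misses the two points of the exceptional circle in the directions of the $u$-axis (your $a\to\pm\infty$), so the statement that every orbit with $v>0$ has $\bar C$ as $\alpha$-limit is not yet fully justified by the divisor node alone — a backward orbit could a priori escape toward the equator directions. Passing to the complementary directional chart $u=\pm\sigma^{4}$, $v=\sigma^{5}b$ one finds at $b=\sigma=0$ a hyperbolic saddle whose stable manifold is the divisor and whose unstable manifold is the equator $\{v=0\}$; hence no orbit with $v>0$ can have these points in its $\alpha$-limit set, all backward orbits are funnelled into the divisor node, and the sector $v>0$ is a single parabolic sector, confirming the unstable node with no elliptic or hyperbolic sectors. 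This is exactly the verification you flagged as the remaining obstacle, and it is easily supplied; with it your argument is complete.
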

\begin{proof}
In order to analyze equilibria at infinity for the unforced system\eqref{1.2.3}, it is first necessary to map infinity onto the Poincar\'{e} disk. By the Poincar\'{e} transformation
\begin{align*}
x=\frac{1}{z},~~y=\frac{u}{z},
\end{align*}
mapping points on the equator onto the plane $(u,z)$, we get
\begin{align*}
  \dot{x}=-\frac{1}{z^2}\dot{z},~\dot{y}=\frac{\dot{u}z-\dot{z}u}{z^2},
\end{align*}
substituting into the system\eqref{1.2.3}, we can get
\begin{equation}\label{2.3.1.1}
 \left\{
 \begin{array}{ll}
  \dot{u}=-u^{2}+(\mu z^{2}+1-\frac{1}{z^{2}})\frac{u}{z^{2}}+\beta-\frac{\epsilon}{z^{2}},\\
  \dot{z}=-zu.
 \end{array}
 \right.
\end{equation}
The space of the variable $z$ at the position of the denominator, when $z=0$ is meaningless, so we need to travel through the time transformation $dt=z^{4}d\tau $, get an equivalent system
\begin{equation}\label{2.3.1.2}
 \left\{
 \begin{array}{ll}
  \frac{du}{d\tau}=-u-\epsilon z^{2}+z^{2}u+\beta z^{4}+\mu z^{4}u-u^{2}z^{4},\\
  \frac{dz}{d\tau}=-z^{5}u.
 \end{array}
 \right.
\end{equation}
Obviously the system\eqref{2.3.1.2} only has one equilibrium $B(0,0)$, which corresponds to the Jacobian matrix is
\begin{equation*}
  \left(
     \begin{array}{cc}
       -1 & 0 \\
       0 & 0 \\
     \end{array}
   \right),
\end{equation*}
this is a degenerate equilibrium with eigenvalues $\lambda_{1}=-1$ and $ \lambda_{2}=0.$

In order to facilitate the analysis, the variable in the equation\eqref {2.3.1.2} is still represented by $x,y$ through the time transformation $d\tau\rightarrow-d\tau$ to get
\begin{equation*}\label{2.3.1.3}
  \left\{
  \begin{array}{l}
     \frac{dx}{d\tau}=x^5y\triangleq\varphi(x,y), \\
     \frac{dy}{d\tau}=y+\epsilon x^2-x^2y-\beta x^4-\mu x^4y+x^4y^2\triangleq y+\psi(x,y).
   \end{array}
   \right.
\end{equation*}
Implicit function $y=\phi(x)$ can be solved form $y+\psi(x,y)=0$ by the implicit function theorem. And because $\psi(x,y)$ is an analytic function, and  $\phi(0)=\dot{\phi}(0)=0$, so the method of undetermined coefficients can be used for the $\phi(x)$, we can assume
\begin{align*}
  \phi(x)=a_{2}x^2+a_{3}x^3+a_{4}x^4+\cdots.
\end{align*}
Substituting $\phi+\psi(x,\phi)=0$, compare the coefficients to determine the implicit function
$$\phi(x)=-\epsilon x^2+(\beta-\epsilon)x^4+\cdots.$$
Put in $\varphi(x,y)$
\begin{equation*}
  \varphi(x,\phi(x))=-\epsilon x^7+(\beta-\epsilon)x^9+\cdots,
\end{equation*}
when$\epsilon\neq0$, $m=7,~a_{7}=-\epsilon,$
when$\epsilon=0$, $m=9, ~a_{9}=\beta>0.$

Therefore, according to [\cite{L11}, Theorem 7.1 of Chapter 2], when $\epsilon\leq0$, $B(0,0)$ is an unstable node; when $\epsilon>0$, $B(0,0)$ is a saddle. Notice that we did the time transform $d\tau\rightarrow-d\tau$, so we need to return to the original time $\tau$, that is, when $\epsilon\leq0$, $B(0,0)$ is a stable node; when $\epsilon>0$, $B(0,0)$ is a saddle.

Since the time transformation $dt=z^{4}d\tau$ has an even number of degrees,
therefore, the corresponding point of $B(0,0)$ on the Poincar\'{e} disk $\bar{B}(1,0)$, and $\bar{\bar{B}}(-1,0)$ is consistent with the stability and trajectory trend of $B(0,0)$.

To make $x=\frac{v}{z},~~y=\frac{1}{z}$, Plug into the system\eqref{1.2.3} to get
\begin{equation*}\label{2.3.1.4}
 \left\{
 \begin{array}{ll}
  \dot{v}=1-\beta v^{2}-(\mu+\frac{v^{2}}{z^{2}}-\frac{v^{4}}{z^{4}})v+\epsilon\frac{v^{4}}{z^{2}},\\
  \dot{z}=-\mu z-\beta vz-\frac{v^{2}}{z}+\epsilon\frac{v^{3}}{z}+\frac{v^{4}}{z^{3}}.
 \end{array}
 \right.
\end{equation*}
Similarly, you need to do the time transformation $dt=z^{4}d\tau$, which is
\begin{equation}\label{2.3.1.5}
 \left\{
 \begin{array}{ll}
  \frac{dv}{d\tau}=z^{4}+v^{5}-\mu z^{4}v-\beta v^{2}z^{4}-v^{3}z^{2}+\epsilon v^{4}z^{2},\\
  \frac{dv}{d\tau}=-\mu z^{5}-v^{2}z^{3}+v^{4}z-\beta vz^{5}+\epsilon v^{3}z^{3}.
 \end{array}
 \right.
\end{equation}
Obviously, $C(0,0)$ is a equilibrium of the system \eqref{2.3.1.5}, and its corresponding Jacabian matrix is the zero matrix, and both eigenvalues are 0. Since the equilibrium is degenerate, the type of equilibrium can not be directly judged, so it is necessary to discuss the characteristic direction and trajectory direction. By the polar coordinate transformation $v=r\cos\theta$,~$z=r\sin\theta$, \eqref{2.3.1.5} is reduced to a system in polar coordinates
\begin{equation*}\label{2.3.1.6}
  \left\{
  \begin{array}{ll}
    \dot{r}=r^4\cos\theta \sin^4\theta+O(r^5),\\
    r\dot{\theta}=-r^4\sin^5\theta.
  \end{array}
  \right.
\end{equation*}
That is
\begin{align*}\label{2.3.1.7}
 \frac{1}{r}\frac{dr}{d\theta}=\frac{\cos\theta \sin^4\theta+O(r)}{-\sin^5\theta}=\frac{H(\theta)+O(1)}{G(\theta)+O(1)}.
\end{align*}
So the characteristic equation $G(\theta)=-\sin^5\theta$, the value that satisfies $G(\theta)=0$ has $0,\pi$. While $H(\theta)=\cos\theta\sin^4\theta$, and $H(0)=H(\pi)=0 $. It cannot be directly determined whether $\theta=0$ and $\theta=\pi$ are one characteristic direction.

Below Briot-Bouquet transformation in rail line, the $ v = v$,  $z = z_ {1} v $, $dt_{1} = vd \tau $, system \eqref {2.3.1.5} as
\begin{equation}\label{2.3.1.8}
  \left\{
  \begin{array}{l}
    \frac{dv}{dt_{1}}=v^4-v^4z^2_{1}+v^3z^4_{1}-\mu v^4z^4_{1}+\epsilon v^5z^2_{1}-\beta v^5z^4_{1}, \\
    \frac{dz_{1}}{dt_{1}}=v^2z^5_{1}.
  \end{array}
  \right.
\end{equation}
At this point, the origin is still a degenerate equilibrium. By the polar coordinate transformation $v=r\cos\theta$, $z_{1}=r\sin\theta$, get
\begin{align*}
  \frac{dr}{rd\theta}=\frac{\cos^5\theta+O(r^1)}{-\cos^4\theta \sin\theta+O(r^1)}.
\end{align*}
So $G(\theta)=-\cos^4\theta \sin\theta$, $H(\theta)=\cos^5\theta$, such that the solution of $G(\theta)=0$ is $\theta_{0}=0,\pi,\frac{\pi}{2},\frac{3\pi}{2}$.

Because of $G^{'}(0)H(0)=G^{'}(\pi)H(\pi)=-1<0$, therefore, $t_{1}\rightarrow\infty$ has a unique path along $\theta=0$, $\theta=\pi$ to enter the origin. Because $H(\frac{\pi}{2})=H(\frac{3\pi}{2})=0$, so not sure whether the two direction rail line into the origin. Repeat the above steps to make $v=v$, $z_{1}=z_{2}v$, $dt_{2}=vdt_{1}$ into system\eqref{2.3.1.8}
\begin{equation}\label{2.3.1.9}
  \left\{
  \begin{array}{l}
    \frac{dv}{dt_{2}}=v^3-v^5z^2_{2}+\epsilon v^6z^2_{2}+v^6z^4_{2}-\mu v^7z^4_{2}-\beta v^8z^4_{2}, \\
    \frac{dz_{2}}{dt_{2}}=-v^2z_{2}+v^4z^3_{2}-\epsilon v^5z^3_{2}+\mu v^6z^5_{2}+\beta v^7z^5_{2}.
  \end{array}
  \right.
\end{equation}
By polar coordinate transformation $v=r\cos\theta$, $z_{2}=r\sin\theta$, get $G(\theta)=-2\cos^3\theta \sin\theta$, $H(\theta)=\cos^2\theta(\cos^2\theta-\sin^2\theta)$, such that the solution of $G(\theta)=0$ is  $\theta_{0}=0,\pi,\frac{\pi}{2},\frac{3\pi}{2}$. Due to $G^{'}(0)H(0)=G^{'}(\pi)H(\pi)=-2<0$, therefore, when $t_{2}\rightarrow\infty$, there is a unique path along $\theta=0$, $\theta=\pi$ to enter the origin. Because $H(\frac{\pi}{2})= H(\frac{3\pi}{2})=0$, so not sure whether the two direction rail line into the origin. Repeat the above steps to make $v=v$, $z_{2}=z_{3}v$, $dt_{3}=vdt_{2}$ into system\eqref{2.3.1.9}
\begin{equation}\label{2.3.1.10}
  \left\{
  \begin{array}{l}
    \frac{dv}{dt_{3}}=v^2-v^6z^2_{3}+\epsilon v^7z^2_{3}+v^9z^4_{3}-\mu v^{10}z^4_{3}-\beta v^{11}z^4_{3}, \\
    \frac{dz_{3}}{dt_{3}}=-2vz_{3}+2v^5z^3_{3}-2\epsilon v^6z^3_{3}-v^8z^5_{3}+2\mu v^9z^5_{3}+2\beta v^{11}z^5_{3}.
  \end{array}
  \right.
\end{equation}
By polar coordinate transformation $v=r\cos\theta$,~$z_{3}=r\sin\theta$, get $G(\theta)=-3\cos^2\theta \sin\theta$,~ $H(\theta)=\cos^3\theta-2\cos\theta\sin^2\theta$, such that the solution of the $G(\theta)=0$ is $\theta_{0}=0,~\pi,~\frac{\pi}{2},~\frac{3\pi}{2}$. Because of $G^{'}(0)H(0)=G^{'}(\pi)H(\pi)=-3<0$, so when $t_{3}\rightarrow\infty$ along $\theta=0$, $\theta=\pi$ has a unique track into the origin. Because $H(\frac {\pi}{2})=H(\frac{3\pi}{2})=0$, so not sure whether the two direction rail line into the origin. Although the result of this step is the same as the previous step, you can see that the minimum number of times has been reduced once again. To make $v=v$, $z_{3}=z_{4}v$, $dt_{4}=vdt_{3}$ into system\eqref{2.3.1.10}
\begin{equation}\label{2.3.1.11}
  \left\{
  \begin{array}{l}
    \frac{dv}{dt_{4}}=v-v^7z^2_{4}+\epsilon v^8z^2_{4}+v^9z^4_{4}-\mu v^{13}z^4_{4}-\beta v^{14}z^4_{4}, \\
    \frac{dz_{4}}{dt_{4}}=-3z_{4}+v^4z^2_{4}+2v^6z^3_{4}-v^8z^5_{4}-v^{11}z^5_{4}+3\beta v^{15}z^5_{4}+3\mu v^{13}z^5_{4}-3\epsilon v^7z^3_{4}.
  \end{array}
  \right.
\end{equation}
In this case $O(0,0)$ is a saddle of the system\eqref{2.3.1.11}. According to the characteristics of the Briot-Bouquet transformation, it can be concluded that $C(0,0)$ is an unstable node. And since the time transformation $dt=z^{4}d\tau$ has an even degree, So corresponding to Poincar\'{e} disc two equilibria $\bar{C}(0,1)$ and $\bar{\bar{C}}(0,1)$ stability and rail line to the same as $C(0,0)$.
\end{proof}

Next, we will discuss when the autonomous system\eqref{1.2.3} generates limit cycle. Below we discuss the existence of limit cycles for autonomous systems\eqref{1.2.3} in different parameter ranges.
For simplicity, the whole parameter space is divided into the following five subsets:
\begin{equation*}
  (c1):\left\{
  \begin{array}{lll}
    \epsilon  \leq0,\\
    \beta  >0,\\
    \mu  \in\mathbb{R},
  \end{array}
  \right.
  (c2):\left\{
  \begin{array}{lll}
    \epsilon  <0,\\
    \beta  =0,\\
    \mu  \in\mathbb{R},
  \end{array}
  \right.
  (c3):\left\{
  \begin{array}{lll}
    \epsilon  >0,\\
    \beta  =0,\\
    \mu  \leq-\frac{5}{36},
  \end{array}
  \right.
\end{equation*}
\begin{equation*}
  (c4):\left\{
  \begin{array}{lll}
    \epsilon  \in\mathbb{R},\\
    \beta  \in\mathbb{R},\\
    \mu  \leq-\frac{1}{4},
  \end{array}
  \right.
  (c5):\left\{
  \begin{array}{lll}
    \epsilon  >0,\\
    \beta  =0,\\
    \mu  \geq0,
  \end{array}
  \right.
  (c6):\left\{
  \begin{array}{lll}
    \epsilon  >0,\\
    \beta  >0,\\
    \mu  =\mu_{3},
  \end{array}
  \right.
  (c7):\left\{
  \begin{array}{lll}
    \epsilon  >0,\\
    \beta  >0,\\
    \mu  >\mu_{3}.
  \end{array}
  \right.
\end{equation*}
\begin{lemma}\label{L6}
When one of conditions (c1),(c2),(c3) and (c4) holds, System \eqref{1.2.3} exhibits neither limit cycles nor homoclinic loops.
\end{lemma}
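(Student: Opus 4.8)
The plan is to split the four parameter regions and, in each, invoke a classical obstruction to closed invariant curves, using Lemma \ref{L1} for the finite equilibria and, where needed, the Li\'enard form \eqref{1.2.5}. For region (c4) ($\mu\le -1/4$) I would use Bendixson's criterion. The divergence of \eqref{1.2.3} is $\partial_x(y)+\partial_y\big((\mu+x^2-x^4)y+\beta x-\epsilon x^3\big)=\mu+x^2-x^4$; since $x^2-x^4=\tfrac14-(x^2-\tfrac12)^2\le\tfrac14$ with equality only at $x=\pm 1/\sqrt2$, this is $\le\mu+\tfrac14\le 0$ on $\mathbb{R}^2$ and vanishes at most on two vertical lines. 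Hence for any closed curve $\Gamma$ built from trajectory arcs (a periodic orbit or a homoclinic loop), $\oint_\Gamma(\dot x\,dy-\dot y\,dx)=\iint_{\mathrm{int}\,\Gamma}(\mu+x^2-x^4)\,dx\,dy$ forces $0<0$, so no such $\Gamma$ exists.

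For regions (c1) and (c2), Lemma \ref{L1} tells us the only equilibrium is the origin $O$, a (topological) saddle of Poincar\'e index $-1$. A periodic orbit must enclose equilibria of total index $+1$, and nothing of index $+1$ is available, so there is no periodic orbit. A homoclinic loop must be based at $O$; it is then a simple closed curve carrying $O$ on it with no equilibrium in its interior, so its interior index is $0$, contradicting the standard fact that a separatrix loop, like a periodic orbit, encloses equilibria of total index $+1$. Hence there are no homoclinic loops either.

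Region (c3) ($\epsilon>0$, $\beta=0$, $\mu\le -5/36$) is where the real work lies and where the constant $-5/36$ enters. I would pass to the Li\'enard form \eqref{1.2.5}, which for $\beta=0$ has $g(x)=\epsilon x^3$ and is carried to \eqref{1.2.3} by the global diffeomorphism $(x,y)\mapsto(x,y+F(x))$ of $\mathbb{R}^2$, hence transports periodic orbits and homoclinic loops bijectively. Factor $F(x)=x\big(-\mu-\tfrac13 x^2+\tfrac15 x^4\big)=x\,\phi(x^2)$ with $\phi(s)=\tfrac15 s^2-\tfrac13 s-\mu$. The discriminant of $\phi$ is $\tfrac19+\tfrac45\mu$, which is $\le 0$ precisely when $\mu\le -\tfrac5{36}$; in that case $\phi\ge 0$ on $[0,\infty)$, so $xF(x)=x^2\phi(x^2)\ge 0$ for all $x$ and vanishes only on a finite union of vertical lines. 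Taking $W(x,y)=\tfrac12 y^2+\tfrac\epsilon4 x^4\ge 0$, along \eqref{1.2.5} one computes $\dot W=y(-g(x))+g(x)(y-F(x))=-g(x)F(x)=-\epsilon x^2\,(xF(x))\le 0$, strictly off those lines. A periodic orbit or homoclinic loop $\Gamma$ would give $\oint_\Gamma\dot W\,dt=0$, but $\Gamma$ is a non-constant connected curve, not contained in a finite union of vertical lines, so $\dot W<0$ on a subarc and $\oint_\Gamma\dot W\,dt<0$ — a contradiction. (For (c3) the absence of homoclinic loops is in any case immediate, since for $\mu<0$, $\beta=0$ the equilibrium $O$ is a node.)

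The main obstacle — and the only delicate point — is (c3): one must realize that neither Bendixson's criterion nor the naive energy $\tfrac12 y^2+\tfrac\epsilon4 x^4$ for \eqref{1.2.3} (whose derivative is $(\mu+x^2-x^4)y^2$) yields the threshold $-5/36$; both give only $-1/4$. The improvement comes exactly from switching to the Li\'enard coordinates, where the needed sign condition becomes $xF(x)\ge 0$, i.e.\ nonnegativity of the quadratic $\tfrac15 s^2-\tfrac13 s-\mu$ in $s=x^2$, which holds on the strictly larger range $\mu\le -5/36$. Everything else reduces to Bendixson's criterion or to elementary Poincar\'e index bookkeeping, so the write-up should be short once this observation is in place.
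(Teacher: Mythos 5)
Your proposal is correct and takes essentially the same route as the paper: Bendixson's criterion for (c4), index theory at the saddle $O$ for (c1)--(c2), and the Li\'enard energy $E(x,y)=\tfrac{\epsilon}{4}x^4+\tfrac12 y^2$ with $\dot E=-g(x)F(x)=-\epsilon x^4\bigl(\tfrac15 x^4-\tfrac13 x^2-\mu\bigr)\le 0$ for (c3). You are in fact more explicit than the paper on the points it glosses over, namely the discriminant computation that produces the threshold $\mu\le-\tfrac{5}{36}$, the transport of closed orbits under the change to Li\'enard coordinates, and the index bookkeeping that also rules out homoclinic loops.
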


\begin{proof}
When $\mu \leq - \frac{1}{4} $, we can calculate the system \eqref {1.2.3} divergence
\begin{align*}\label{2.3.3}
  \mathrm{div}\{y, (\mu+x^2-x^4)y+\beta x-\epsilon x^3\}=&\mu+x^2-x^4 \notag\\
  =&-(x^2-\frac{1}{2})^2+(\mu+\frac{1}{4})\notag\\
  \leq &~0,
\end{align*}
Therefore, according to the Bendixon-Dulac criterion, when $\mu\leq-\frac{1}{4}$, the system \eqref{1.2.3} has no closed orbit.

When $\epsilon\leq0$, $\beta>0$ and $\epsilon<0$, $\beta=0$, the finite equilibrium has only the origin $O(0,0)$, which is a saddle, if there is a limit cycle must contain this saddle, and according to the index theory, The system\eqref{1.2.3} has no limit cycle.

When $\epsilon>0 $, $\beta=0$, $\mu\leq-\frac{5}{36}$, equilibrium $O(0, 0)$ is a stable nodes
\begin{align*}
  E(x,y)=\int_{0}^{x}g(s)ds+\frac{y^2}{2}.
\end{align*}
Therefore
\begin{align*}
  \frac{dE(x,y)}{dt}\mid_{(1.7)}=-g(x)F(x)=\epsilon x^4(\mu+\frac{1}{3}x^2-\frac{1}{5}x^4)\leq0.
\end{align*}
If there is a limit cycle $\gamma$, then there is
\begin{align*}
  \oint_{\gamma}\frac{dE(x,y)}{dt}\mid_{(1.7)}=\oint_{\gamma}\epsilon x^4(\mu+\frac{1}{3}x^2-\frac{1}{5}x^4)dt<0.
\end{align*}
With the $\oint_{\gamma}\frac{dE(x,y)}{dt}=0$ contradiction, so the system \eqref{1.2.3} no limit cycle.

\end{proof}

\begin{lemma}
When the condition (c5) holds, there is a unique stable limit cycle for system\eqref{1.2.3}.
\end{lemma}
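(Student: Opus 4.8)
The plan is to work with the Li\'{e}nard form \eqref{1.2.5}. Under condition (c5) we have $\beta=0$, so \eqref{1.2.3} becomes $\dot x=y-F(x)$, $\dot y=-g(x)$ with $F(x)=-\mu x-\tfrac13 x^3+\tfrac15 x^5$ and $g(x)=\epsilon x^3$. Both $F$ and $g$ are odd, $g\in C^1$, $xg(x)=\epsilon x^4>0$ for $x\neq0$, and $G(x):=\int_0^x g=\tfrac{\epsilon}{4}x^4\to+\infty$. Since $\beta=0$ the only finite equilibrium is $O(0,0)$, which by Lemma \ref{L1} is an unstable node ($\mu>0$) or an unstable focus ($\mu=0$); in either case $O$ repels. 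As preparation I would record the shape of $F$: writing $F(x)=\tfrac{x}{15}\bigl(3x^4-5x^2-15\mu\bigr)$ and treating $3t^2-5t-15\mu$ as an upward parabola in $t=x^2$ whose value at $t=0$ is $-15\mu\le0$, one sees $F$ has exactly one positive zero $a$ with $a^2=\tfrac{5+\sqrt{25+180\mu}}{6}$, with $F<0$ on $(0,a)$, $F>0$ on $(a,\infty)$ and $F(x)\to+\infty$; a routine inequality $\tfrac{1+\sqrt{1+4\mu}}{2}\le a^2$ (the left-hand side being the square of the positive root of $F'(x)=x^4-x^2-\mu$) then gives that $F$ is strictly increasing on $[a,\infty)$.

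For existence of a limit cycle I would apply the Poincar\'{e}--Bendixson theorem to an annulus. Along \eqref{1.2.5} the positive definite energy $E(x,y)=G(x)+\tfrac12 y^2=\tfrac{\epsilon}{4}x^4+\tfrac12 y^2$ satisfies $\dot E=-g(x)F(x)$; since $g(x)F(x)=\epsilon x^3F(x)<0$ for $0<|x|<a$, we have $\dot E>0$ there, so a small level curve $\{E=c\}$ around $O$ is crossed strictly outward and serves as the inner boundary. For the outer boundary I would use Lemma \ref{L4}: on the Poincar\'{e} disc the equilibria at infinity are the saddles $\bar B,\bar{\bar B}$ and the unstable nodes $\bar C,\bar{\bar C}$, and, since in addition $\mathrm{div}=\mu+x^2-x^4\to-\infty$ as $|x|\to\infty$, no orbit escapes to infinity in forward time, so a sufficiently large closed curve is crossed inward. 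The enclosed annulus contains no equilibrium, hence carries a periodic orbit. (Equivalently, the existence half of the classical Li\'{e}nard theorem applies directly, using the properties of $F$ listed above together with $G\to\infty$.)

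For uniqueness I would invoke Zhang Zhifen's uniqueness criterion for generalized Li\'{e}nard systems: with $f(x)=F'(x)=x^4-x^2-\mu$ we have $\dfrac{f(x)}{g(x)}=\dfrac1\epsilon\Bigl(x-\dfrac1x-\dfrac{\mu}{x^3}\Bigr)$, whose derivative $\dfrac1\epsilon\Bigl(1+\dfrac1{x^2}+\dfrac{3\mu}{x^4}\Bigr)$ is strictly positive for every $x\neq0$ because $\mu\ge0$ and $\epsilon>0$. Thus $f/g$ is strictly monotone on $(-\infty,0)$ and on $(0,\infty)$ and is not identically zero near $0$, which together with $xg(x)>0$ forces at most one limit cycle, necessarily hyperbolic and stable. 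Combined with the existence step, \eqref{1.2.3} has exactly one limit cycle and it is stable; by the $\mathbb{Z}_2$-equivariance of \eqref{1.2.3}, uniqueness makes this cycle symmetric about the origin. (If one prefers: a unique limit cycle enclosing the repeller $O$, with no other invariant set inside and no escape to infinity outside, must attract on both sides, so it is orbitally asymptotically stable.)

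\textbf{Main obstacle.} The one genuinely delicate point is the outer boundary in the Poincar\'{e}--Bendixson step, i.e.\ ruling out orbits that run off to infinity; this is exactly where the global picture from Lemma \ref{L4} — the orientation of the separatrices of the saddles $\bar B,\bar{\bar B}$ and the nodal character of $\bar C,\bar{\bar C}$ — is essential. Going instead through the classical Li\'{e}nard existence theorem trades this for verifying the single-positive-zero and monotonicity statements about $F$, which are elementary but do require the parabola-in-$x^2$ bookkeeping sketched above.
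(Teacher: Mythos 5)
Your proposal is correct, and its existence half is essentially the paper's argument: the paper also builds a Poincar\'{e}--Bendixson annulus whose inner boundary is taken around the unstable origin (unstable node for $\mu>0$, unstable focus for $\mu=0$) and whose outer boundary is the equator of the Poincar\'{e} disc together with the infinity equilibria from Lemma \ref{L4} (saddles $\bar{B},\bar{\bar{B}}$, unstable nodes $\bar{C},\bar{\bar{C}}$), with orbits entering the annulus. Your explicit inner barrier $E=\tfrac{\epsilon}{4}x^4+\tfrac12 y^2$ with $\dot E=-g(x)F(x)>0$ for $0<|x|<a$ is a slightly more concrete version of the same step. Where you genuinely go beyond the paper is uniqueness: the paper's proof stops at "there is a stable limit cycle" and never justifies the word "unique" in the statement, whereas your application of Zhang Zhifen's criterion in the Li\'{e}nard form --- $xg(x)>0$ and $\bigl(f/g\bigr)'=\tfrac1\epsilon\bigl(1+\tfrac1{x^2}+\tfrac{3\mu}{x^4}\bigr)>0$ on each half-line, valid precisely because $\mu\ge0$, $\epsilon>0$ --- gives at most one limit cycle, hyperbolic and stable, so your argument actually closes a gap in the paper's proof. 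Two small caveats: your remark that $\mathrm{div}=\mu+x^2-x^4\to-\infty$ "so no orbit escapes to infinity" is not by itself a proof (negative divergence controls area, not boundedness of individual orbits); the real content of the outer boundary is, exactly as you flag, the orientation information in Lemma \ref{L4}, which is also all the paper offers at that point, so you are at parity with the paper there. Also, the monotonicity of $F$ on $[a,\infty)$ is only needed if you route existence through the classical Li\'{e}nard theorem; it is superfluous for the Poincar\'{e}--Bendixson route you and the paper take.
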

\begin{proof}
When $\epsilon>0, \beta=0, \mu\geq0$, by Lemma \ref{L4} we can know that $\bar{B}(1,0)$ and $\bar{\bar{B}}(1,0)$ are saddle, $\bar{C}(0,1)$ and $\bar{\bar{C}}(0,-1)$ are unstable nodes, so we choose the equator and these two pairs of diametral points together to form the outer boundary of the ring domain, and the orbitals point to the inner boundary. At this time, only one equilibrium at the origin on the finite plane is an unstable equilibrium (unstable node or unstable focus), and this equilibrium can be used as an inner boundary line, thus from the Poincar\'{e}-Bendixson ring domain theorem we can know that the system\eqref{1.2.3} has a stable limit cycle.
\end{proof}

Thus we prove that the system \eqref{1.2.3} has a large limit cycle with three equilibria when the parameter is in the range ~\normalsize{\textcircled{\scriptsize{1}}}~in the Theorem \ref{T2.2.0}.
Through the above analysis, we can give a representative global phase diagram of the unforced system\eqref{1.2.3}.

When $\epsilon\leq0$, the system\eqref{1.2.3} has only one equilibrium as the saddle, then there is no limit cycle, as shown in Figure \ref{F6}(a).

When $\epsilon>0, \beta=0, \mu\geq0$, system\eqref{1.2.3} has only one unstable equilibrium (unstable node or unstable focus), in which case there is a stable limit cycle, as shown in Figure \ref{F6}(b).

When $\epsilon>0, \beta>0, \mu\leq\mu_{c}$, system\eqref{1.2.3} has three equilibrium, where the origin $O(0,0)$ is the saddle and $E_{1,2}$ are stable foci, and there is no limit cycle, as shown in Figure \ref{F6}(c).

When $\epsilon>0, \beta>0, \mu_{c}<\mu<\mu_{3}$, system\eqref{1.2.3} has three equilibrium, in which $O(0,0)$ is a saddle, $E_{1,2}$ are unstable foci, and there is a stable limit cycle near $E_{1,2}$, as shown in Figure \ref{F6}(d).

When $\epsilon>0, \beta>0, \mu=\mu_{3}$, system\eqref{1.2.3} has three equilibrium, where the origin $O(0,0)$ is the saddle, $E_{1,2}$ are unstable foci, and there is a homohoming orbit at the saddle point, as shown in Figure \ref{F6}(e).

When $\epsilon>0, \beta>0, \mu>\mu_{3}$, system\eqref{1.2.3} has three equilibrium, where the origin $O(0,0)$ is a saddle, $E_{1,2}$ are unstable foci, and there is a stable large limit cycle containing three equilibria, as shown in Figure \ref{F6}(f).
\begin{figure}[htbp]
    \centering
    \subfigure[$\epsilon\leq0$]
        {
            \begin{minipage}{4cm}
                \centering
                \includegraphics[width=4 cm,height=4cm]{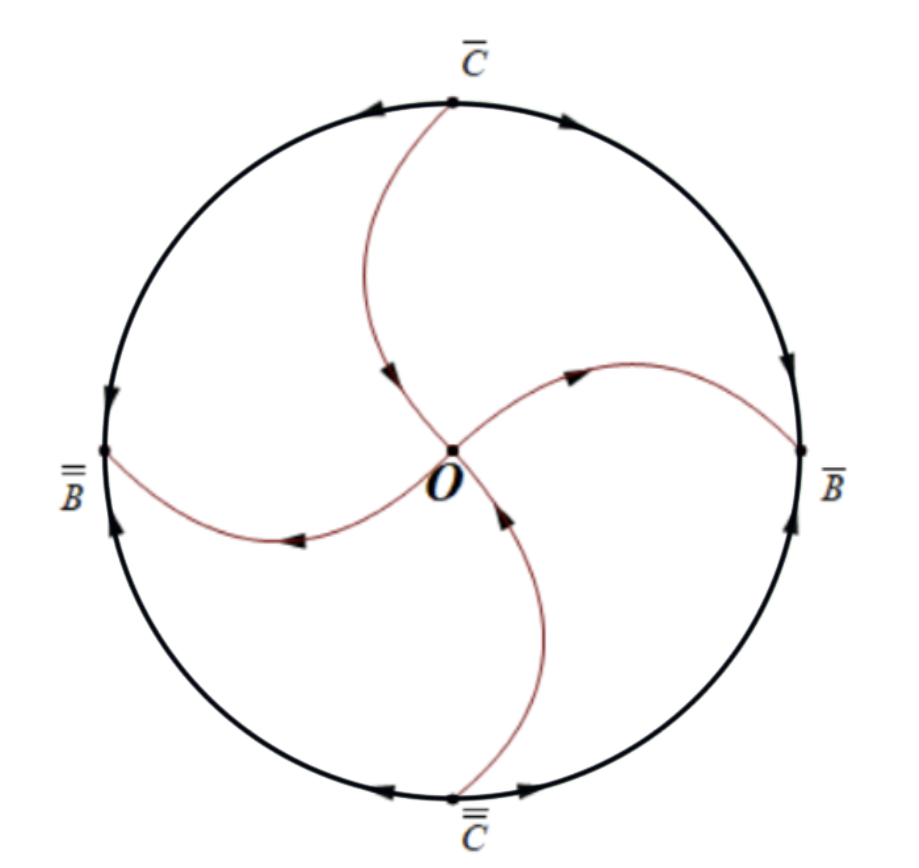}
            \end{minipage}
        }
    \subfigure[$\epsilon>0, \beta=0, \mu\geq0$]
        {
            \begin{minipage}{4cm}
                \centering
                \includegraphics[width=4 cm,height=4cm]{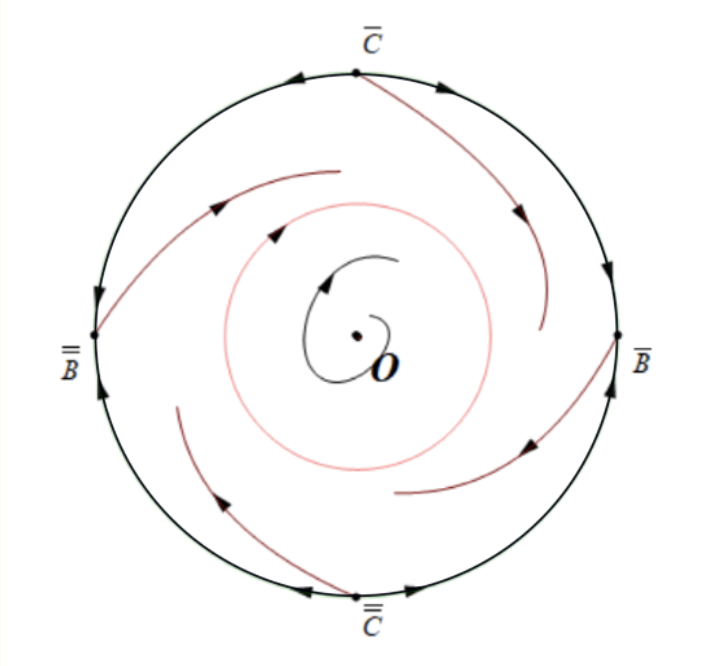}
            \end{minipage}
        }
    \subfigure[$\epsilon>0, \beta>0, \mu\leq\mu_{c}$]
        {
            \begin{minipage}{4cm}
                \centering
                \includegraphics[width=4 cm,height=4cm]{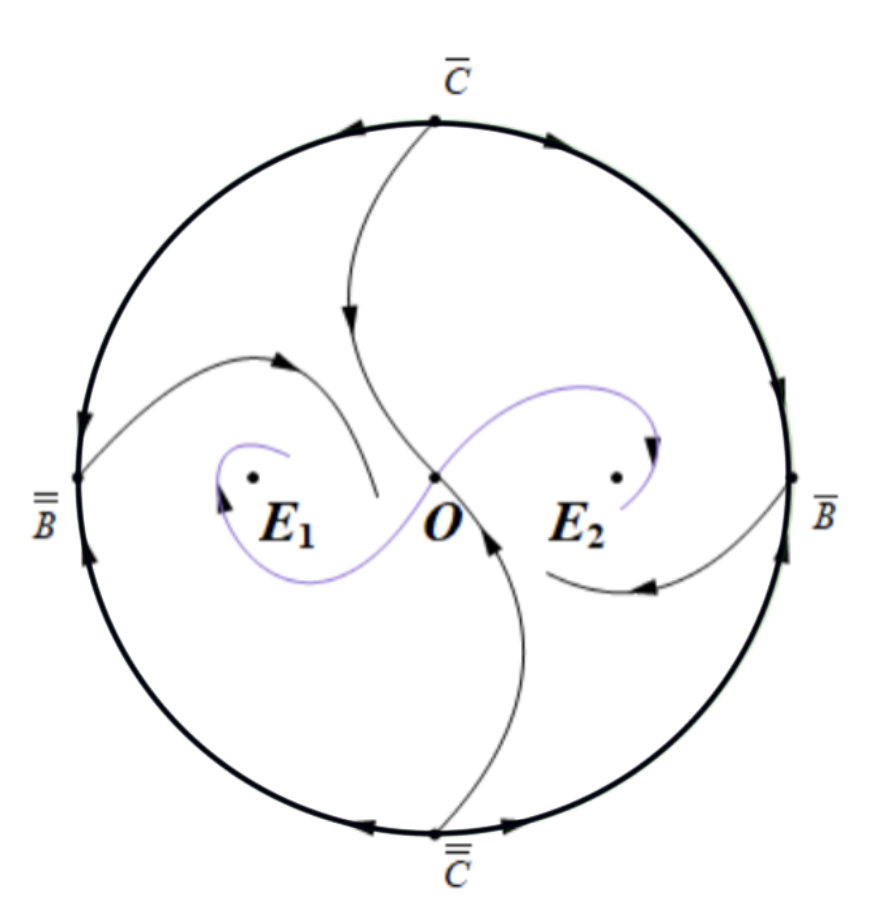}
            \end{minipage}
        }

    \subfigure[$\epsilon>0, \beta>0, \mu_{c}<\mu<\mu_{3}$]
        {
            \begin{minipage}{4cm}
                \centering
                \includegraphics[width=4 cm,height=4cm]{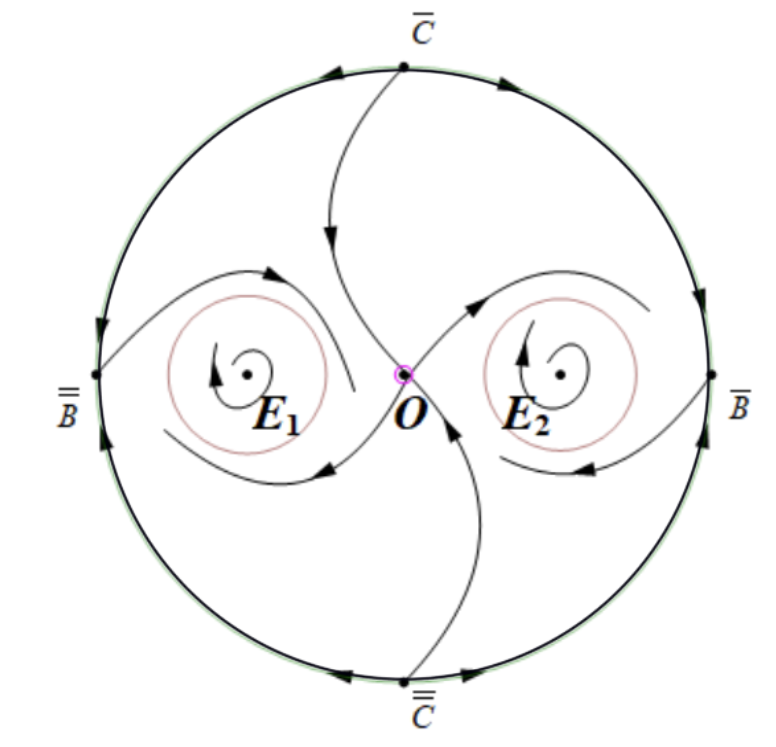}
            \end{minipage}
        }
    \subfigure[$\epsilon>0, \beta>0, \mu=\mu_{3}$]
        {
            \begin{minipage}{4cm}
                \centering
                \includegraphics[width=4 cm,height=4cm]{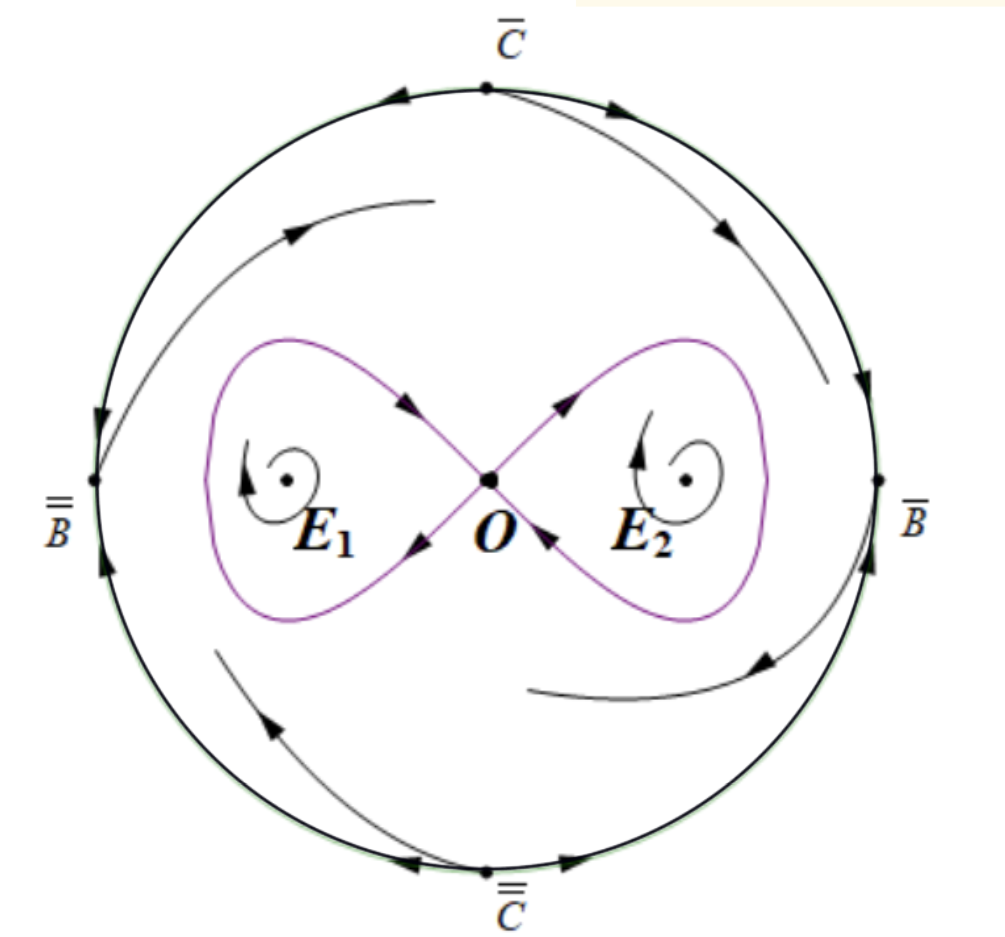}
            \end{minipage}
        }
     \subfigure[$\epsilon>0, \beta>0, \mu>\mu_{3}$]
        {
            \begin{minipage}{4cm}
                \centering
                \includegraphics[width=4 cm,height=4cm]{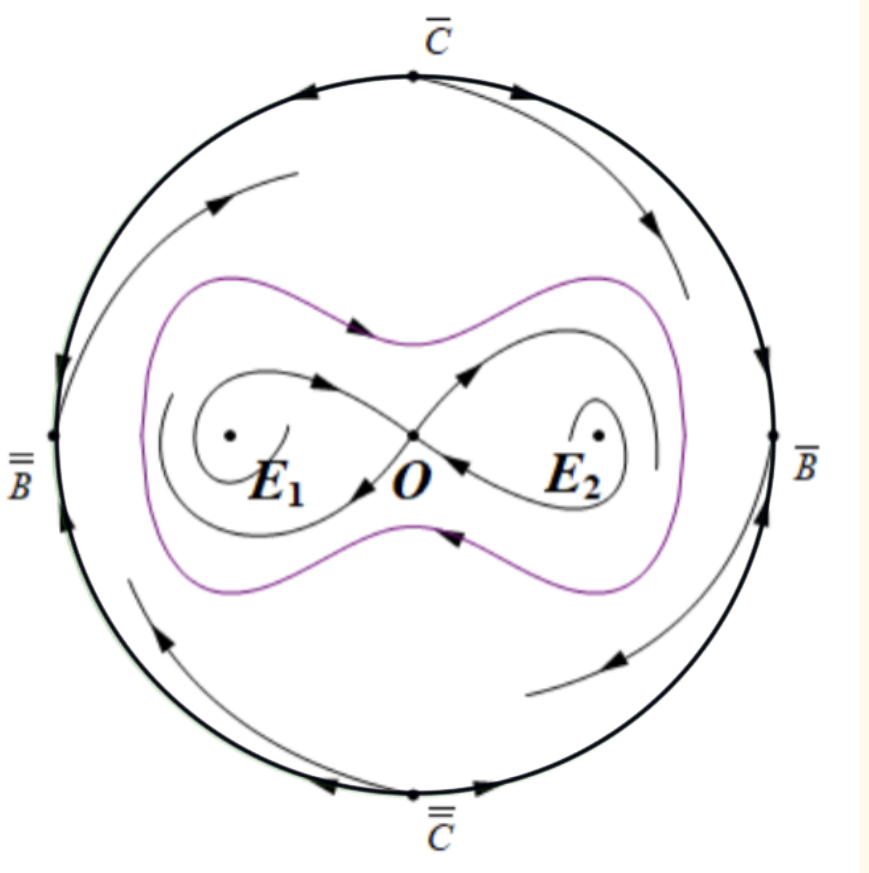}
            \end{minipage}
        }
    \caption{Global phase diagram for an unforced system \eqref{1.2.3}}\label{F6}
\end{figure}
\par
\section{The existence of periodic and quasi-periodic solutions for forced systems}
\label{chapter4}
\def\theequation{4.\arabic{equation}}
\setcounter{equation}{0}
\par
In the previous section, we studied the dynamics of autonomous systems. In this section, we will apply a KAM theory to discuss whether the original system has two-dimensional invariant torus. In the previous chapter, we know that the autonomous system\eqref{1.2.3} will generate Hopf bifurcation when the parameter $\mu=\mu_{c}$ is near the equilibria $E_{1,2}$, so we need to first shift the bifurcation parameter to the critical value, and then move the equilibrium to the origin. Then the system \eqref{1.2.2} is reduced to a standard form suitable for KAM theoretical analysis by a series of transformations similar to the Hopf bifurcation canonical form derived from autonomous systems. Finally, the existence of a two-dimensional quasi-periodic invariant torus of the system\eqref{1.2.2} is discussed by this KAM theorem. For convenience, we have relisted\eqref{1.2.2} here
\begin{equation}\label{3.1}
    \left\{
        \begin{array}{ll}
        \dot{x}=y,\\
        \dot{y}=(\mu+x^{2}-x^{4})y+\beta x-\epsilon x^{3}-\alpha x\cos(\omega t).
        \end{array}
    \right.
\end{equation}
\par
In the previous section, we know that the system\eqref{1.2.3} when the parameter $\mu=\mu_{c}$, the Hopf bifurcation will be generated near two equilibria: $E_{1},E_{2}$. Since $E_{1}$ is symmetric with the orbit near $E_{2}$, So just analyze the equilibrium $E_{2}(x_{2},0)$, where $x_{2}=\sqrt{\frac{\beta}{\epsilon}}$. We get the standard form of autonomous system by a series of transformations. Due to the addition of periodic external forces, we guess that the system\eqref{3.1} will produce quasi-periodic solutions of two fundamental frequencies near $E_{2}$. The standard form will be obtained by a series of transformations similar to the previous section. First, translate the bifurcation parameters so that $\mu=\mu_{c}+\xi$,~$\xi$ are small perturbed parameters, then the system\eqref{1.2.3} becomes
\begin{equation}\label{3.2.1}
  \left\{
  \begin{array}{ll}
    \dot{x}=y, \\
  \dot{y}=(\mu_{c}+\xi+x^2-x^4)y+\beta x-\epsilon x^3.
  \end{array}
  \right.
\end{equation}
Then we apply the translation transformation \eqref{2.2.3} to move $E_{2}(x_{2},0)$ to the origin,
\begin{equation}\label{3.2.2}
  \left\{
  \begin{array}{ll}
    \dot{x}=&y, \\
  \dot{y}=&-2\beta x+\xi y-3\epsilon x_{1}x^2+\frac{2x_{1}(-2\beta+\epsilon)}{\epsilon}xy\\
  &-\epsilon x^3+(1-6x^2_{1})x^2y-4x_{1}x^3y-x^4y.
  \end{array}
  \right.
\end{equation}
The corresponding Jacabian matrix of the origin is
\begin{equation*}
  \left(
  \begin{array}{cc}
    0 & 1 \\
    -2\beta & \xi \\
  \end{array}
\right),
\end{equation*}
Its corresponding eigenvalue is
\begin{align*}
  \lambda=&\frac{1}{2}(\xi+{\rm i}\sqrt{4\beta-\xi^2}), \\
  \bar{\lambda}=&\frac{1}{2}(\xi-{\rm i}\sqrt{4\beta-\xi^2}).
\end{align*}
After a series of transformations \eqref{2.2.5}, \eqref{2.2.7}, \eqref{2.2.8} similar to the transformation in the previous chapter \eqref{3.2.2} can be written
\begin{equation}\label{3.2.4}
  \dot{z}=\lambda(\xi) z+c_{1}(\xi)z^2\bar{z}+O(|z|^4).
\end{equation}
Next order
\begin{align*}
  z=\sqrt{\rho}e^{{\rm i}\varphi},
\end{align*}
then the system\eqref{3.2.4} can be reduced to
\begin{equation}\label{3.2.6}
  \left\{
  \begin{array}{ll}
    \dot{\rho}&=2\Re\lambda(\xi)\rho+2\Re c_{1}(\xi)\rho^2+P_{1}(\rho,\varphi,\xi), \\
  \dot{\varphi}&=\Im\lambda(\xi)+\Im c_{1}(\xi)\rho+P_{2}(\rho,\varphi,\xi),
  \end{array}
  \right.
\end{equation}
where
\begin{align*}
  \Re\lambda(\xi)=&\frac{1}{2}\xi,\\
  \Im\lambda(\xi)=&\frac{1}{2}(\sqrt{4\beta-\xi^2})=\sqrt{\beta}+\frac{1}{4\sqrt{\beta}}\xi+O(\xi^2),
\end{align*}
and $P_{1}(\rho,\varphi,\xi)$ is a smooth function about $\rho$ starting from the cubic term, and $P_{2}(\rho,\varphi,\xi)$ is a smooth function about $\rho$ starting from the quadratic term.

{\rem We provide the following concrete example to show that the following scaling transformation of the parameters and the system can be achieved. Fixed $\beta=1$, $\epsilon=1$ then $\mu_{c}=0$, can be calculated
\begin{align*}
  &x_{1} = -1, \\
  &\Re\lambda(\xi) =0.5\xi, \\
  &\Im\lambda(\xi) =1+0.25\xi+O(\xi^2),\\
  &c_{1}(\xi)\approx (2-1.65885\mathrm{i})+(2.94835-0.625\mathrm{i})\xi+O(\xi^2).
\end{align*}}

For the truncation equation $2\Re\lambda(\xi)\rho+2\Re c_{1}(\xi)\rho^2$ of the polar radius of the system\eqref{3.2.6}, it has a non-zero equilibrium solution
\begin{align*}
  \rho_{0}=-\frac{\Re\lambda(\xi)}{\Re c_{1}(\xi)}.
\end{align*}
Since $\xi$ is a small perturbation parameter, we can set $\xi\rightarrow\varepsilon\xi$. Obviously $\rho_{0}=\frac{1}{4}\varepsilon\xi+O(\varepsilon^2\xi^2)=O(\varepsilon)$, make a translation transformation
\begin{align*}
  \rho=\varepsilon^{\frac{3}{2}}I+\rho_{0},
\end{align*}
then the system\eqref{3.2.6} can be written as
\begin{equation}\label{3.2.7}
  \left\{
  \begin{array}{ll}
    \dot{I}=&-2\Re\lambda(\xi)I\varepsilon+2\Re c_{1}(\xi)I^2\varepsilon^{\frac{3}{2}}+\varepsilon^{\frac{5}{2}}\widetilde{P_{1}}(I,\varphi,\xi), \\
  \dot{\varphi}=&\Im\lambda(\xi)+\Im c_{1}(\xi)\rho_{0}+\varepsilon^{\frac{3}{2}}\Im c_{1}(\xi)I+\varepsilon^{\frac{3}{2}}\widetilde{P_{2}}(I,\varphi,\xi),
  \end{array}
  \right.
\end{equation}
denote as
\begin{equation}\label{3.2.8}
  \left\{
  \begin{array}{ll}
    \dot{I}&=\varepsilon(H_{1}(\xi)I+\varepsilon^{\frac{1}{2}}\widetilde{G_{1}}(I,\varphi,\xi)), \\
  \dot{\varphi}&=w_{1}(\xi)+\varepsilon^{\frac{3}{2}}\widetilde{G_{2}}(I,\varphi,\xi),
  \end{array}
  \right.
\end{equation}
where
\begin{align*}
  H_{1}(\xi)&=-\xi, \\
  w_{1}(\xi)&=\Im\lambda(\varepsilon\xi)-\frac{\Re\lambda(\varepsilon\xi)\Im(\varepsilon\xi)}{\Re c_{1}(\varepsilon\xi)}\\
  &=w_{10}+w_{11}(\xi)\varepsilon+O(\varepsilon^2),
\end{align*}
and $\widetilde{G_{1}}(I,\varphi,\xi)$, $\widetilde{G_{2}}(I,\varphi,\xi)$ all about $I,\varphi$ is real analytic, and about $\xi$ is sufficiently smooth on some bounded closed set.

{\rem In order to facilitate the proof of the following theorem, specific examples are given: fixed $\beta=1$, $\epsilon=1$, then $\mu_{c}=0$, we can calculate it
\begin{align*}
  &\rho_{0}\approx -0.5\varepsilon\xi, \\
  &H_{1}(\xi)=-\xi, \\
  &w_{1}(\xi,\varepsilon)\approx 1+0.82943\varepsilon\xi+O((\varepsilon\xi)^2).
\end{align*}}

Now consider the original system\eqref{3.1}, because $\alpha$is a small quantity, can make $\alpha\rightarrow\varepsilon^3\alpha$, $\psi=\omega t$, after the above similar series of transformations, The system\eqref{3.1} can be written as
\begin{equation}\label{3.2.9}
  \left\{
  \begin{array}{ll}
    \dot{I}&=\varepsilon[H_{1}(\xi)I+\varepsilon^{\frac{1}{2}}G_{1}(I,\varphi,\psi,\xi,\alpha,\varepsilon)],\\
    \dot{\varphi}&=w_{1}(\xi,\varepsilon)+\varepsilon^{\frac{3}{2}}G_{2}(I,\varphi,\psi,\xi,\alpha,\varepsilon),\\
    \dot{\psi}&=\omega.
  \end{array}
  \right.
\end{equation}
and $G_{1}(I,\varphi,\psi,\xi)$, $G_{2}(I,\varphi,\psi,\xi)$ are all about $I,~\varphi,~\psi$ real analytic, and about $\xi$ is sufficiently smooth in the region of $\xi>0$. For reduced equation\eqref{3.2.9} we have the following theorem.

{\theorem \label{D1}Hypothesis $\xi\in\Pi=[\frac{1}{16},1]$, So for the given $0<\gamma_{0}\ll1$, there are sufficiently small positive numbers $\varepsilon_{0}^{*}$,
such that for $0<\varepsilon<\varepsilon_{0}^{*}$, $\varepsilon_{0}^{*}=o(\gamma^{4}_{0})$, there is a Cantor subset $\Pi_{\gamma_{0}}\subset[\frac{1}{16},1]$,
for any of $\xi\in\Pi_{\gamma_{0}}$, the system\eqref{3.2.9} exist two quasi-periodic solutions with the fundamental frequency $(\omega_{1}^{*},\omega)$, $|\omega_{1}^{*}-\omega_{1}|=O(\varepsilon^{\frac{3}{2}})$
and when $\gamma_{0}\rightarrow0$, we have ${\rm meas}(\Pi\setminus\Pi_{\gamma_{0}})\rightarrow0$.}
\par
\begin{proof}
In section 2 we know that the autonomous system\eqref{1.2.3} has a small limit cycle in the sufficiently small right neighborhood of the bifurcation point $\mu=\mu_{c}$ (i.e. $\xi=0$), and because we have scaled $\xi\rightarrow\varepsilon\xi$ for the parameter $\xi$, so long as $\varepsilon$ is small enough to value $\xi$ in a closed set, $\varepsilon\xi$ is in the sufficiently small right neighborhood of $\xi=0$. Since the KAM theorem is used to analyze the existence of quasi-periodic solutions, it is usually necessary to restrict $\xi$ to a closed interval leaving $\xi=0$, we may as well take $\xi\in[\frac{1}{16},1]$ and let $\Pi=[\frac{1}{16},1]$.
System\eqref{3.2.9} is a special case that takes $n_{11}=n_{21}=0$,~~$n_{12}=1$,~~$n_{22}=2$,~~$q_{1}=q_{7}=\frac{3}{2}$,~~$q_{2}=q_{5}=\frac{1}{2}$,~~$q_{3}=q_{4}=q_{6}=1$ , make
\begin{align*}
  I_{2}=I,~~\varphi_{2}=(\varphi,\psi)^{T},
\end{align*}
generation into the reduced equation\eqref{3.2.9} can be written
\begin{equation}\label{3.3.1}
  \left\{
  \begin{array}{ll}
    \dot{I}_{2}&=\varepsilon[A_{2}(\xi)I_{2}+\varepsilon^{\frac{1}{2}}g_{2}(I_{2},\varphi_{2},\xi,\varepsilon)],\\
    \dot{\varphi}_{2}&=w(\xi,\varepsilon)+\varepsilon^{\frac{3}{2}}g_{4}(I_{2},\varphi_{2},\xi,\varepsilon),\\
  \end{array}
  \right.
\end{equation}
where
\begin{align*}
  A_{2}(\xi)=-\xi,
\end{align*}
\begin{equation*}
  w^{0}(\xi,\varepsilon)=
  \left(
  \begin{array}{cc}
     w_{1}(\xi,\varepsilon) &  \\
     \omega
   \end{array}
   \right).
\end{equation*}

The assumptions conditions (H1)-(H3) and the nondegenerativity condition with respect to frequency are verified below. Since the system\eqref{3.3.1} is a special case when [\cite{L35},Theorem 2.2] takes $n_{11}=n_{21}=0$, $n_{12}=1$, $n_{22}=2$, $q_{1}=q_{7}=\frac{3}{2}$, $q_{2}=q_{5}=\frac{1}{2}$, $q_{3}=q_{4}=q_{6}=1$, it clearly satisfies (H1).

Since $g_{2}$,~$g_{4}$ is continuously differentiable with respect to the arguments and smooth with respect to the coordinate variables of any order, and $n_{2}=n_{21}+n_{22}=2$, so it is desirable to $l=30$,~$\alpha=1$,~$\iota=3$. Obviously $g_{i}\in C^{l,\alpha}(\Omega\times\mathbb{T}^{n_{2}},~\Pi_{0})(i=2,4)$, $l>2(\alpha+1)(\iota+2)+\alpha\iota,~\iota>\alpha n_{2}-1$, that is, the hypothesis (H3) is satisfied.

For $\xi\in\Pi=[\frac{1}{16},1]$, $\inf\limits_{\xi\in\Pi}|A_{2}(\xi)|=\frac{1}{16}$, therefore, according to the existence theorem of implicit functions, there exist positive constants $c_{0},~c_{1}$ and $\varepsilon^{*}$ such that for any $\varepsilon\in(0,\varepsilon^{*}]$ satisfying
\begin{align*}
  \inf\mid\lambda(\xi)\mid=&\inf\mid A_{2}(\xi)\mid  \geq0,\\
  \parallel B_{2}\parallel_{1;\Pi},~ \parallel B^{-1}_{2}\parallel_{1;\Pi},&
  \parallel w^{0}\parallel_{0;\Pi}:=\sup\limits_{\xi\in\Pi}\mid w^{0}\mid  \leq c_{1},
\end{align*}
It is known from Remark 2
\begin{align*}
  \parallel \partial_{\xi} w^{0}\parallel_{\Pi}:=\sup\limits_{\xi\in\Pi}\mid \partial_{\xi} w^{0}(\xi,\varepsilon)\mid\leq c_{1}\varepsilon.
\end{align*}
So hypothesis (H2) is true.

That is, assumption conditions (H1)-(H3) in [\cite{L35},Theorem 2.2], standard type\eqref{3.3.1} are satisfied, so for a given $0<\gamma_{0}\ll 1$, there exists a sufficiently small positive number $\varepsilon^{*}$ such that for $0<\varepsilon\leq\varepsilon^{*}$, $\varepsilon^{*}=o(\gamma^{4}_{0})$, there is a subset of Cantor $\Pi_{\gamma_{0}}\in\Pi$, For any $\xi\in\Pi_{\gamma_{0}}$, the system\eqref{3.3.1} has a quasi-periodic solution and an estimate is given. Since the frequency mapping does not satisfy the conditions in the measure estimation of [\cite{L35},Theorem 2.3], so [\cite{L35},Theorem 2.3] cannot be directly applied. It is shown below that the measure estimate of $\Pi_{\gamma 0}$, $\Pi_{\gamma 0}$ is the set of $\Pi$ by removing some parameters that make the denominator too small, because
\begin{align*}
  \omega^{v}&=w^{0}+O(\varepsilon^{\frac{3}{2}}),\Pi_{v}=\Pi_{v-1}\backslash \bigcup_{k}\mathfrak{R}^{v}_{k}(\gamma_{0}),\\
  k\in&\mathbb{Z}^{2}\setminus\{0\}, K_{v-1}<|k|_{2}\leq K_{v},~v=1,2,\cdots
\end{align*}

such that $\Pi_{\gamma 0}=\bigcap\limits_{v=0}^{\infty}\Pi_{v}$, where $\Pi_{0}=\Pi$,
\begin{align}\label{3.3.2}
  \mathfrak{R}_{k}^{v}(\gamma_{0})=\{\xi:|(k,\omega^{v})|<\frac{\varepsilon \gamma_{0}}{|k|^3_{1}}\}.
\end{align}
$\gamma_{0}=\varepsilon^{\kappa},~0<\kappa\leq\frac{1}{4}$. Dig out the parameter set $\bigcup_{k}\mathfrak{R}^{v}_{k}(r_{0})$, measure estimation of $\mathfrak{R}^{v}_{k}(r_{0})$ is performed below, $k=(k_{1},~k_{2})$, $|k|_{1}=|k_{1}|+|k_{2}|$, Substituting the expression $\omega^{v}$ into \eqref{3.3.2} yields
\begin{align*}
 |(k,\omega^{v})|=|k_{1}w_{1}(\xi,\varepsilon)+k_{2}\omega|=|k_{1}(w_{10}+w_{11}(\xi)\varepsilon+O(\varepsilon^2))+k_{2}\omega|<\frac{\varepsilon \gamma_{0}}{|k|^3_{1}},
\end{align*}
where $w_{10}=w_{1}(\xi,\varepsilon)$, $w_{11}=\partial_{\xi}w_{1}|_{\varepsilon=0}$, and $w_{10}$ is a non-zero constant, $\inf\limits_{\xi\in\Pi}|w_{11}|>0$.
make
\begin{equation*}\label{3.3.3}
  \phi(\xi)=|k_{1}(w_{10}+w_{11}(\xi)\varepsilon+O(\varepsilon^2))+k_{2}\omega|.
\end{equation*}
The following analysis needs to be discussed in several different cases:

(i)If $k_{1}w_{10}+k_{2}\omega=0$, Since $k\neq0$, $k_{1}\neq0$, and $w_{10},~\omega$ are constants, so for sufficiently small $\gamma_{0}$, there is
\begin{align*}
 |\phi(\xi)|=|k_{1}(w_{11}(\xi)\varepsilon+O(\varepsilon^2))|\geq\frac{\varepsilon \gamma_{0}}{|k|^3_{1}},
\end{align*}
so $\mathfrak{R}^{v}_{k}(\gamma_{0})={\emptyset}$.

(ii)If $k_{1}w_{10}+k_{2}\omega\neq0$;

(a)If $k_{1}=0$, then $k_{2}\neq0$, then there is
\begin{align*}
  |\phi(\xi)|=|k_{2}\omega|\geq\frac{\varepsilon \gamma_{0}}{|k|^3_{1}},
\end{align*}
so $\mathfrak{R}^{v}_{k}(\gamma_{0})={\emptyset}$.

(b)If $k_{1}\neq0$, since $w_{10}$ and $\omega$ have nothing to do with $\xi$, the derivative of $\phi(\xi)$ with respect to $\xi$ can be obtained as
\begin{align*}
  \frac{d\phi(\xi)}{d\xi}=k_{1}\varepsilon\frac{dw_{11}(\xi)}{d\xi}+O(\varepsilon^2),
\end{align*}
so, by
\begin{align*}
  \mathrm{meas}\{\xi\in\Pi:~|\phi(\xi)|<a\}\leq\frac{2a}{\inf\limits_{\xi\in\Pi}|\phi^{'}(\xi)|},
\end{align*}
and $k_{1}\geq1$, Remark 3 It follows that $\frac{dw_{11}(\xi)}{d\xi}$ has a positive lower bound, so there exists $c_{2}$ such that
\begin{align*}
  \mathrm{meas}~\mathfrak{R}^{v}_{k}(\gamma_{0})\leq c_{2}\frac{r_{0}}{|k|^3_{1}}.
\end{align*}
Combined with the above analysis, we can get
\begin{align*}
  \mathrm{meas}~(\bigcup_{v, k}\mathfrak{R}^{v}_{k}(\gamma_{0}))\leq c_{2}r_{0}\sum_{0\neq k\in\mathbb{Z}^2}\frac{1}{|k|^3_{1}}.
\end{align*}
And because $\sum\limits_{0\neq k\in\mathbb{Z}^2}\frac{1}{|k|^3_{1}}$ converges, there exists $c_{3}$, such that
\begin{align*}
   \mathrm{meas}~(\bigcup_{v,k}\mathfrak{R}^{v}_{k}(\gamma_{0}))\leq c_{3}\gamma_{0}.
\end{align*}
Then there is
\begin{align*}
  \Pi_{\gamma0}=\Pi\setminus(\bigcup_{v,k}\mathfrak{R}^{v}_{k}(\gamma_{0})),
\end{align*}
so
\begin{align*}
  \mathrm{meas}~\Pi_{\gamma_{0}}=\mathrm{meas}~\Pi-O(\gamma_{0}).
\end{align*}

Then for sufficiently small $\gamma_{0}$, the Cantor set $\Pi_{\gamma_{0}}$ defined in theorem\ref{D1} has a positive Lebesgue measure, and when $\gamma_{0}\rightarrow 0$, there is $\mathrm{meas}~(\Pi\setminus\Pi_{\gamma0})\rightarrow0$.\\
\end{proof}

Since the previous transformations are invertible, it can be seen from the Theorem \ref{D1} that the system\eqref{1.2.2} has quasi-periodic solutions. Obviously, a periodic solution is generated near the saddle, $O(0,0)$.

\section{Numerical simulations}
\par

In this section we will give the phase diagram of system\eqref{1.2.3} by numerical simulation and show the bifurcation. For simplicity, we use $UM$ and $SM$ to represent unstable and stable manifolds in the simulation phase diagram. The qualitative properties of the system\eqref{1.2.3} at infinity cannot be reflected in numerical simulations.
Since when $\epsilon\leq0$ the system has only one equilibrium that origin is a saddle and the phase diagram structure is relatively simple, we consider the case where $\epsilon>0$ in the following.

Example 1: When $\epsilon=2$, $\beta=0$, $\mu=0$, the system has only one equilibrium, which the origin is a unstable focus, and there is a stable limit cycle. See the Figure 2(a).

Example 2: When $\epsilon=2$, $\beta=0$, $\mu=1$, the system has only one unstable equilibrium that origin, and there is a stable limit cycle. See the Figure 2(b).

Example 3: When $\epsilon=2$, $\beta=1$, $\mu=-0.25$, the system has three equilibria, the origin is saddle, $E_ {1, 2} $ are stable foci. See the Figure 2(c).

Example 4: When $\epsilon=2$, $\beta=1$, $\mu=0.2$, the system has three equilibria, the origin is saddle, $E_{1,2}$ are unstable foci, and in the $E_{1,2}$ near have a stable limit cycle. See the Figure 2(d).

Example 5: When $\epsilon=2$, $\beta=1$, $\mu=-0.171$, the system has three equilibria, where the origin is saddle, $E_{1,2}$ are the unstable foci, and there is a homocyclic ring that resides at the saddle. See the Figure 2(e).

Example 6: When $\epsilon=2$, $\beta=1$, $\mu=-0.1$, the system has three equilibria, where the origin is a saddle, $E_{1,2}$ are unstable foci, and there is a large limit cycle containing three equilibria. See the Figure 2(f).
\begin{figure}[htbp]
    \centering
    \subfigure[$\epsilon=2$, $\beta=0$, $\mu=0$]
        {
            \begin{minipage}{6cm}
                \centering
                \includegraphics[width=6 cm,height=6cm]{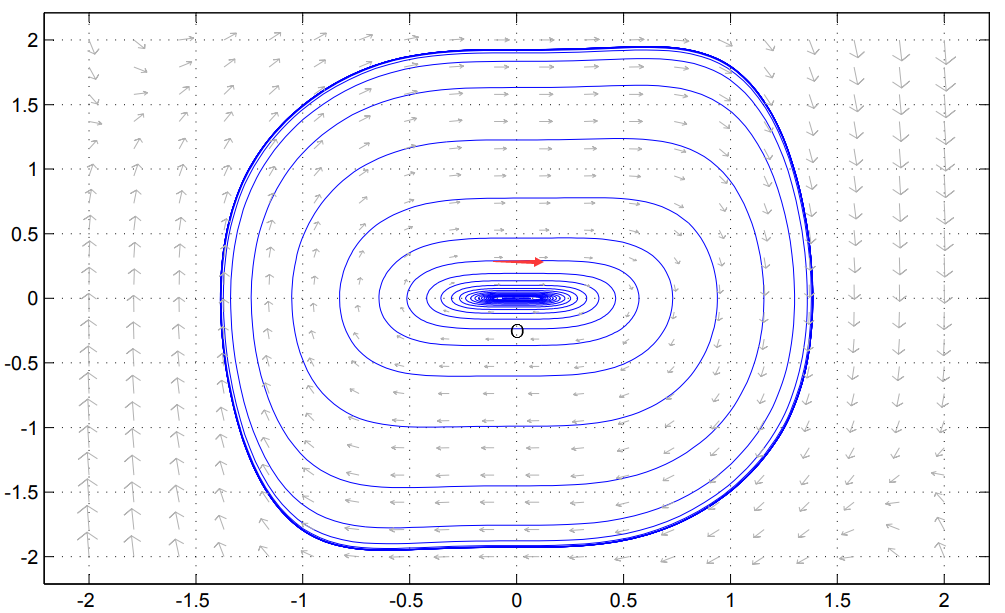}
            \end{minipage}
        }
    \subfigure[$\epsilon=2$, $\beta=0$, $\mu=1$]
        {
            \begin{minipage}{6cm}
                \centering
                \includegraphics[width=6 cm,height=6cm]{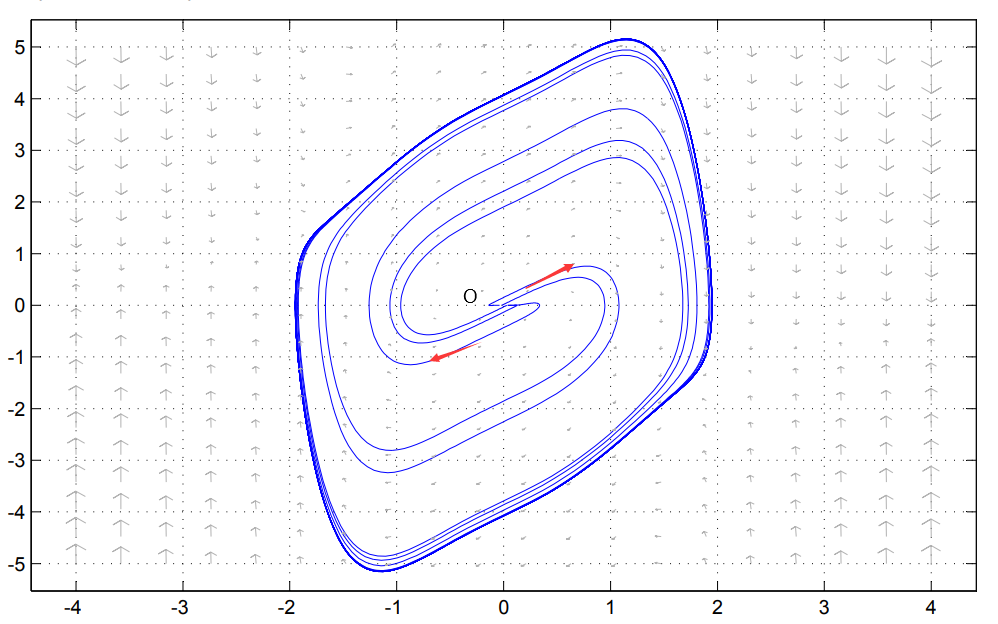}
            \end{minipage}
        }

    \subfigure[$\epsilon=2$, $\beta=1$, $\mu=-0.25$]
        {
            \begin{minipage}{6cm}
                \centering
                \includegraphics[width=6 cm,height=6cm]{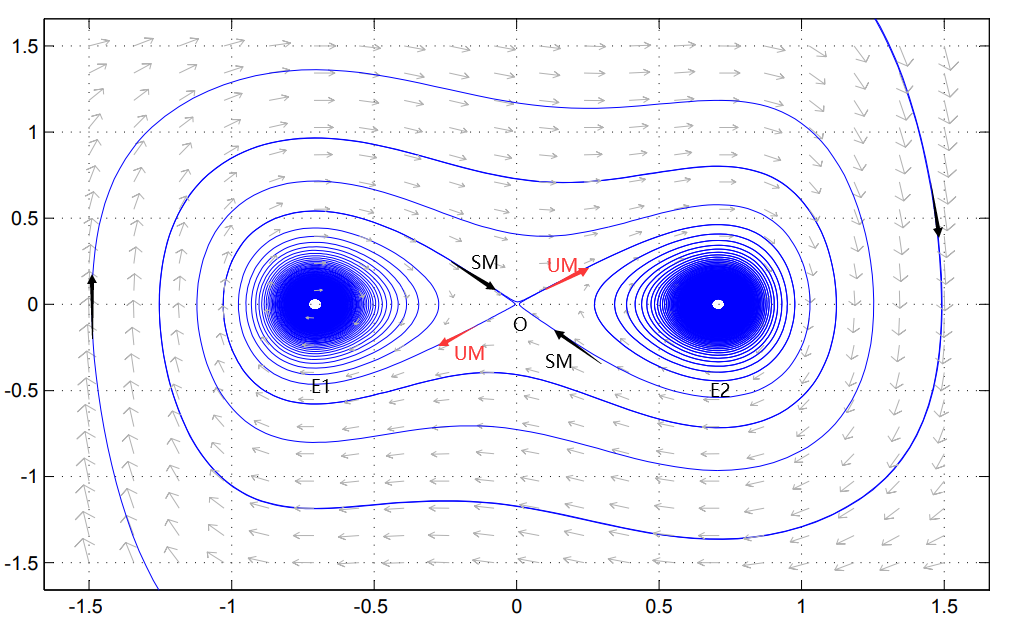}
            \end{minipage}
        }
    \subfigure[$\epsilon=2$, $\beta=1$, $\mu=-0.2$]
        {
            \begin{minipage}{6cm}
                \centering
                \includegraphics[width=6 cm,height=6cm]{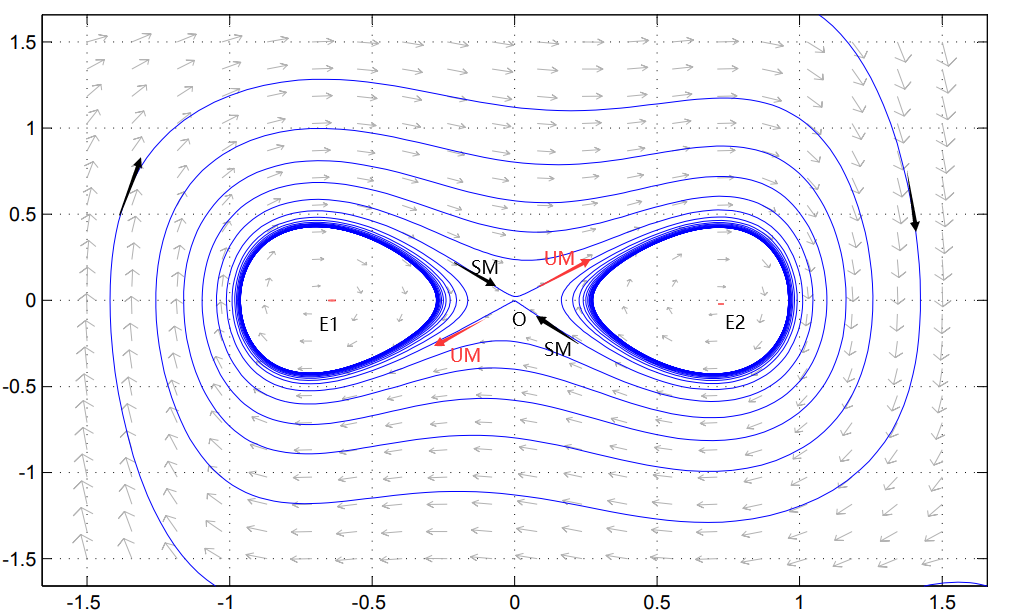}
            \end{minipage}
        }

    \subfigure[$\epsilon=2$, $\beta=1$, $\mu=-0.171$]
        {
            \begin{minipage}{6cm}
                \centering
                \includegraphics[width=6 cm,height=6cm]{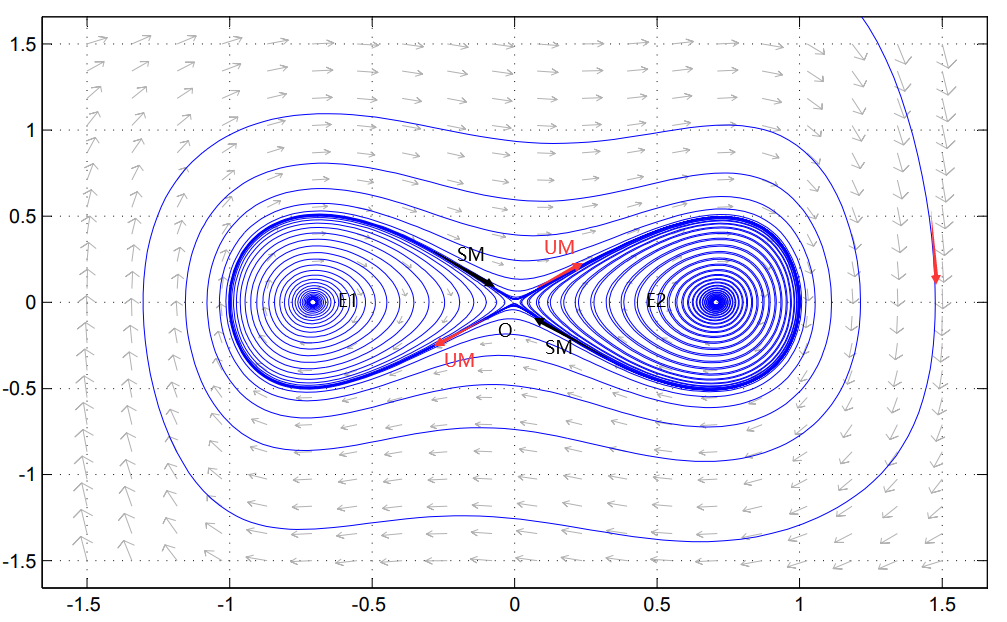}
            \end{minipage}
        }
    \subfigure[$\epsilon=2$, $\beta=1$, $\mu=-0.1$]
        {
            \begin{minipage}{6cm}
                \centering
                \includegraphics[width=6 cm,height=6cm]{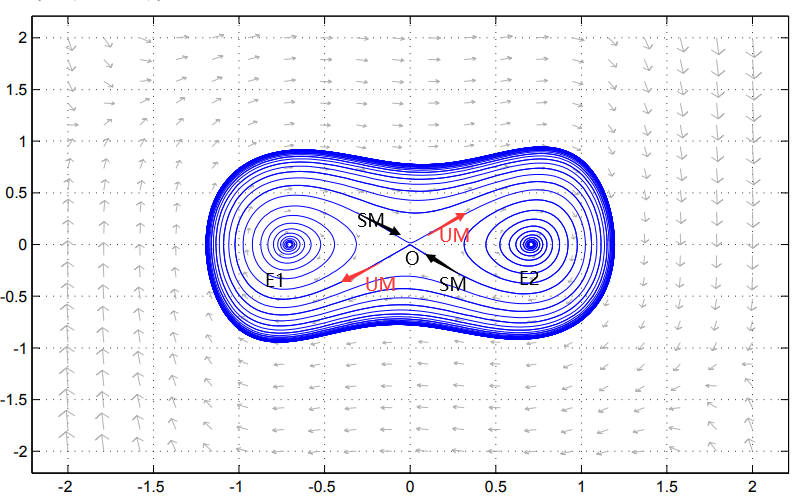}
            \end{minipage}
        }
    \caption{Numerical simulation of the unforced system \eqref{1.2.3}}
\end{figure}
\par

Next, we will show the phase diagram of the system\eqref{3.1} by numerical simulation.
Take $\beta=1$, $\epsilon=3$, $\omega=1$, $\alpha=-0.3$. When $\mu\leq-\frac{2}{9}$, there is no quasi-periodic solution for the system \eqref{3.1}, as shown in Figure \ref{F3}(a) is $\mu=-0.3$. When $\mu>-\frac{2}{9}$, the autonomous system\eqref{1.2.3} has a limit cycle caused by Hopf bifurcation, and when the amplitude $\alpha$ changes in a small range, the system\eqref{3.1} has a two-dimensional torus. The phase diagram of the system\eqref{3.1} when Figure \ref{F3}(b) is $\mu=-0.1$.
\begin{figure}[htbp]
    \centering
    \subfigure[Phase diagram of the system \eqref{3.1} in the $(x,y)$ plane when $\mu= ? 0.3$, $\beta=1$, $\epsilon=3$, $\omega=1$, and $\alpha= ? 0.3$]
        {
            \begin{minipage}{6cm}
                \centering
                \includegraphics[width=6 cm,height=6cm]{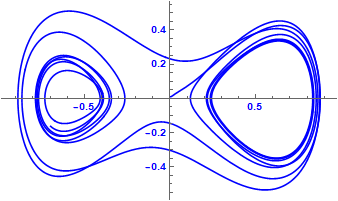}
            \end{minipage}
        }~~~~
    \subfigure[Phase diagram of the system \eqref{3.1} in the $(x,y)$ plane when $\mu=-0.1$, $\beta=1$, $\epsilon=3$, $\omega=1$, $\alpha=-0.3$]
        {
            \begin{minipage}{6cm}
                \centering
                \includegraphics[width=6 cm,height=6cm]{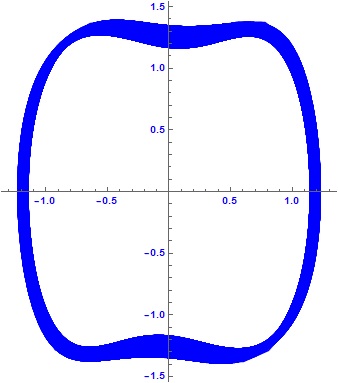}
            \end{minipage}
        }
    \caption{Numerical simulation of forced system \eqref{3.1}}\label{F3}
\end{figure}

Figure \ref{F5} is the oscillatory graph of $x$ and $y$ changing with time $t$ respectively when $\mu=-0.3$. We can see that the changes of $x$ and $y$ are irregular at this time, that is, there is no quasi-periodic solution at this time.
\begin{figure}[htbp]
    \centering
    \subfigure[$x$ changes track with time $t$]
        {
            \begin{minipage}{6cm}
                \centering
                \includegraphics[width=6 cm,height=6cm]{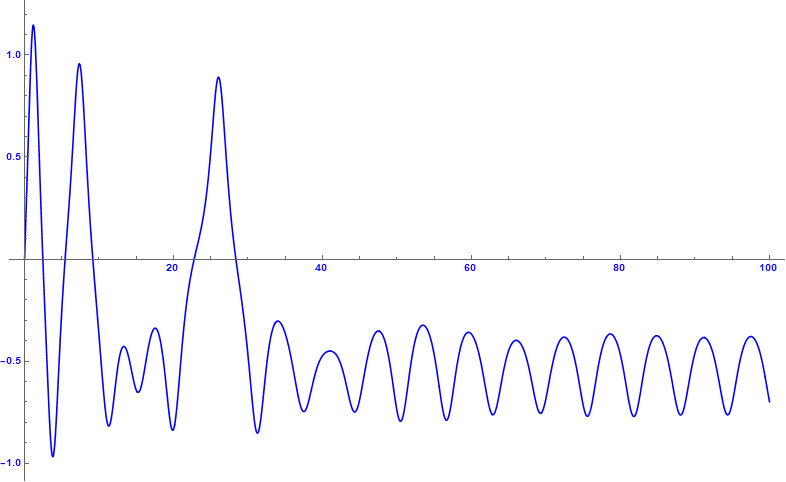}
            \end{minipage}
        }
    \subfigure[$y$ changes track with time $t$]
        {
            \begin{minipage}{6cm}
                \centering
                \includegraphics[width=6 cm,height=6cm]{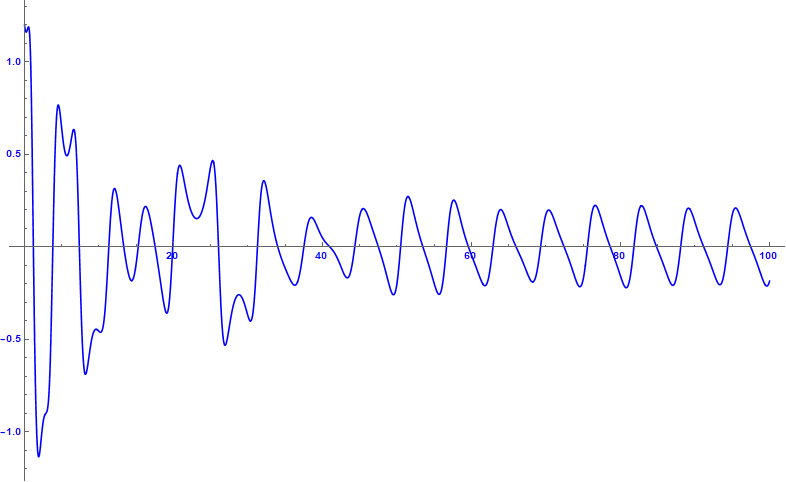}
            \end{minipage}
        }
    \caption{When $\mu=-0.3$, the system \eqref{3.1} takes the initial value $(x_{0},y_{0})=(0,1.2)$as the oscillation diagram of the change of $x$and $y$with time $t$, respectively}\label{F5}
\end{figure}

Figure \ref{F4} shows the oscillation diagram of $x$ and $y$ changing with time $t$ respectively when $\mu=-0.1$. We can see that the changes of $x$ and $y$ show a certain rule, that is, there is a quasi-periodic solution at this time.
\begin{figure}[htbp]
    \centering
    \subfigure[$x$ changes track with time $t$]
        {
            \begin{minipage}{6cm}
                \centering
                \includegraphics[width=6 cm,height=6cm]{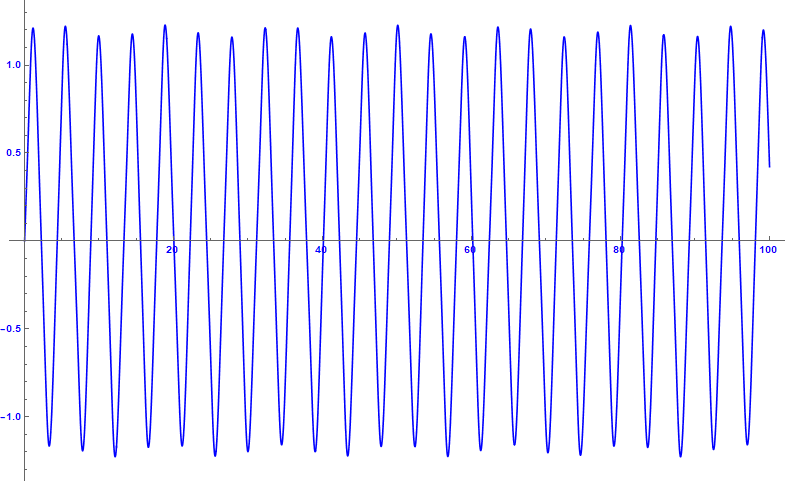}
            \end{minipage}
        }
    \subfigure[$y$ changes track with time $t$]
        {
            \begin{minipage}{6cm}
                \centering
                \includegraphics[width=6 cm,height=6cm]{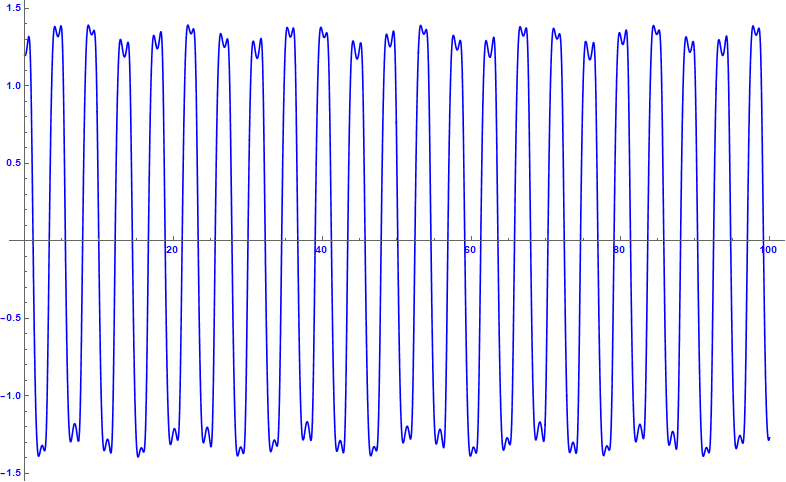}
            \end{minipage}
        }
    \caption{When $\mu=-0.1$, the system \eqref{3.1} takes the initial value $(x_{0},y_{0})=(0,1.2)$as the oscillation diagram of the change of $x$and $y$with time $t$, respectively}\label{F4}
\end{figure}

\section{Conclusions and Prospects}
\par
In this paper, we mainly study the dynamic properties of van der Pol-Duffing unforced systems with quintic terms and the existence of periodic and quasi-periodic solutions for systems with periodic external forces. Through analysis, it is found that the unforced system\eqref{1.2.3} has rich dynamic properties. The unforced system\eqref{1.2.3} has a total of three parameters, and there may be 1 to 3 equilibria in the finite plane: $O(0,0)$, $E(x_{1},0)$, $E(x_{2},0)$, where the origin exists for any value of the argument. Choosing different parameters as bifurcation will produce different bifurcation at different equilibrium. By numerical simulation, we find that the autonomous system\eqref{1.2.3} can produce pitchfork bifurcation, Hopf bifurcation and homoclinic orbit bifurcation, and the numerical simulation results are strictly proved theoretically.

Pitchfork bifurcation and Hopf bifurcation are local partial bifurcation, and the higher order term does not affect the bifurcation phase diagram. Through the stability analysis of the equilibria, it is found that if $\beta$ is regarded as the branching parameter, pitchfork bifurcation will be generated near the origin $O$. If $\mu$ is regarded as a bifurcation parameter, the Hopf bifurcation will be generated near two equilibria, $E_{1,2}$. In the analysis of pitchfork bifurcation, the center direction and non-center direction are obtained by reversible linear transformation of coordinate variables, and then the central manifold of the system\eqref{1.2.3} is calculated to limit the system to the central manifold and the standard form of pitchfork bifurcation is obtained. It is found that the system\eqref{1.2.3} generates a subcritical pitchfork bifurcation when $\mu>0$, and a supercritical pitchfork bifurcation when $\mu<0$. When analyzing Hopf bifurcation, since the system\eqref{1.2.3} is symmetric, we only need to analyze the Hopf bifurcation generated by an equilibrium. Firstly, the equilibrium point $E_{2}$ is moved to the origin by coordinate translation transformation. Then the standard form of the Hopf bifurcation is obtained by calculating the first Lyapunov coefficient and Normal-Form method. It is found that when $\mu>\mu_{c}$, the system \eqref{1.2.3} branches off a stable small limit cycle near the equilibrium points $E_{1}$ and $(E_{2})$, respectively.

Homoclinic orbit bifurcation is a global bifurcation. The system\eqref{1.2.3} is transformed into an approximate Hamilton system by transformation, and then the Melnikov function is calculated to obtain the homoclinic orbit when $\mu=\mu_{3}$, and the homoclinic orbit bifurcation line is obtained. Next, we analyze the homoclinic orbit generated by the system\eqref{1.2.3} by mapping method, and prove the existence of the homoclinic orbit again from another angle, and prove the existence of the large limit cycle (including three finite equilibria) by using the ring domain theorem. Combined with numerical simulation, it can be seen that there is only one large limit cycle and it is stable. In the global analysis, we focus on when the limit cycle of the system occurs. By calculating the stability of equilibria at infinity, we know that when $\epsilon>0$, $\beta=0$, $\mu\geq0$, the unforced system\eqref{1.2.3} has a stable limit cycle. Then, through the Bendion-Dulac criterion and index theory, we know that when $\epsilon\leq0$, $\beta>0$ and $\epsilon<0$, $\beta=0$, system\eqref{1.2.3} haven't limit cycle, and there is no limit cycle for $\mu\leq-\frac{1}{4}$. The global phase diagram of the system\eqref{1.2.3} is given based on the above analysis.

Finally, the existence of quasi-periodic solutions of system\eqref{1.2.2} with periodic disturbances is analyzed by KAM theorem. We reduce the system \eqref{1.2.2} to normal form in polar coordinates by a shift transformation of the parameters and a series of coordinate transformations similar to the canonical form of the Hopf bifurcation. It is proved by KAM theorem that there are quasi-periodic solutions with two fundamental frequencies for this canonical form, and thus the existence of quasi-periodic solutions caused by Hopf bifurcation near the equilibrium points $E_{1}$ and $E_{2}$ of the original system\eqref{1.2.2} is proved. There is no theoretical analysis and numerical simulation in the reference\cite{L8} for the above analysis.
\par
In this paper, the system\eqref{1.2.2} has been systematically analyzed, but there is still some work to be further studied on the unforced system \eqref{1.2.3}, mainly in the following two aspects:
\par
1. The existence of the large limit cycle of the system\eqref{1.2.3} when $\mu<0$ has not been discussed, we obtain from the Bendion-Dulac criterion that the system\eqref{1.2.3} has no closed orbit when $\mu\leq-\frac{1}{4}$ has no closed orbit. It is speculated that the existence of the limit cycle of the system \eqref{1.2.3} when $\mu<0$ may be the same as the case of $\mu\leq-\frac{1}{4}$, but no theoretical proof is given.
\par
2. When analyzing the existence and uniqueness of the large limit cycle of the $\epsilon>0,~\beta>0,~\mu>\zeta(\mu)$system \eqref{1.2.3}, the uniqueness of the large limit cycle is obtained by combining numerical simulation. This is partly because of the difficulty of proof, no rigorous theoretical proof has been given.

\vskip 0.2in \noindent{\bf Acknowledgments} \vskip 0.2in
\par  This work is supported  by the NNSF(11371132) of China, by Key Laboratory of High Performance Computing and Stochastic Information Processing.\\

\vskip 0.2in \noindent{\bf References} \vskip 0.2in
\newcounter{cankao}
\begin{list}
{[\arabic{cankao}]}{\usecounter{cankao}\itemsep=0cm} \small

\bibitem{L1}B. van der Pol. The nonlinear theory of electric oscillations. Proceedings of the Institute of Radio Engineers, 1934, 22(9):1051-1086.
\bibitem{L2}M. L. Cartwbight. Balthazar van der Pol. Journal of the London Mathematical Society, 1960, 35(3):367-376.
\bibitem{L4}M. Van Dyke. Perturbation Methods in Fluid Mechanics. Academic Press, 1964.
\bibitem{L6}A. H. Nayfeh and D. T. Mool. Nonlinear Oscillations. New York: John Wiley and Sons, 1979.
\bibitem{L5}Y.Yang. KBM method of analyzing limit cycle flutter of a wing with an external store and comparison with a wind-tunnel test. Journal of Sound and Vibration, 1995, 187(2):271-280.

\bibitem{L3}F. Battelli, K. J. Palmer. Chaos in the Duffing equation. Journal of Differential Equations, 1993, 101:276-301.

\bibitem{L28}I. Lopes, D. Passos, M. Nagy, et al. Oscillator models of the solar cycle: Towards the development of inversion methods. Physics, 2015, 186(1-4):535-559.
\bibitem{L29}Y. Qian, W. Zhang, B.Lin, et al. Analytical approximate periodic solutions for two-degree-of-freedom coupled van der Pol-Duffing oscillators by extended homotopy analysis method. Acta Mechanica, 2011, 219:1-14.
\bibitem{L30}P. Kumar, A. Kumar, S. Erlicher. A modified hybrid van der Pol-Duffing-Rayleigh oscillator for modelling the lateral walking force on a rigid floor. Physica D: Nonlinear Phenomena, 2017, 358:1-14.
\bibitem{L24}P. Yu, A. C. Murza, et al. Coupled oscillatory systems with D-4 symmetry and application to van der Pol oscillators. International Journal of Bifurcation and Chaos in Applied Sciences and Engineering, 2016, 26(8):1650141.
\bibitem{L25}Y. Xu, A. C. J. Luo. Independent period-2 motions to chaos in a van der Pol-Duffing oscillator. International Journal of Bifurcation and Chaos, 2020, 2030045.
\bibitem{L26}D. Monsivais-Velazquez, K. Bhattacharya, R. A. Barrio, et al. Dynamics of hierarchical weighted networks of van der Pol oscillators[J]. Chaos, 2020, 30(12):123146.
\bibitem{L15}F. Dumortier, C. Rousseau. Cubic Lienard equations with linear damping[J]. Nonlinearity, 1990, 3:1015-1039.
\bibitem{L27}F. Dumortier, C. Herssens. Polynomial Li\'{e}nard equations near infinity[J]. Journal of Differential Equations, 1999, 153:1-29.
\bibitem{L16}H.Chen, X.Chen. Dynamical analysis of a cubic Li\'{e}nard system with global parameters. Nonlinearity, 2015, 28:3535-3562.
\bibitem{L13}M. R. Cndido, J. Llibre, C. Valls. Non-existence, existence, and uniqueness of limit cycles for a generalization of the Van der Pol-Duffing and the Rayleigh-Duffing oscillators. Physica D: Nonlinear Phenomena, 2020, 407:132458.

\bibitem{L17}H.Chen, M. Han, Y. Xia. Limit cycles of a Li\'{e}nard system with symmetry allowing for discontinuity. Journal of Mathematical Analysis and Applications, 2018, 468:799-816.
\bibitem{L31}H.Chen, H. Yang, R. Zhang. Global dynamics of two classes of Li\'{e}nard systems with $Z_{2}$-equivariance. Mathematical theory and application, 2020, 40(4):1-14.
\bibitem{L14}Z. Wang, H. Chen. A nonsmooth van der Pol-Duffing oscillator (I): The sum of indices of equilibria is 1. Discrete and Continuous Dynamical Systems-B, 2021, 27(3):1549-1589.

\bibitem{L18}J. Kengne, J. C. Chedjou, M. Kom, et al. Regular oscillations, chaos, and multistability in a system of two coupled van der Pol oscillators: numerical and experimental studies. Nonlinear Dynamics, 2014, 76(2):1119-1132.
\bibitem{L19}H. Zhao, Y.Lin, Y.Dai. Hidden attractors and dynamics of a general autonomous van der Pol-Duffing oscillator*. International Journal of Bifurcation and Chaos in Applied Sciences and Engineering, 2014, 24(6):1450080.
\bibitem{L20}K. Rajagopal, A. J. M. Khalaf, Z. Wei, et al. Hyperchaos and coexisting attractors in a modified van der Pol-Duffing oscillator. International Journal of Bifurcation and Chaos, 2019, 1950067.
\bibitem{L8}Q. Han, W. Xu, X. Yue. Global bifurcation analysis of a Duffing-van der Pol oscillator with parametric excitation. International Journal of Bifurcation and Chaos, 2014, 24(4):333-126.

\bibitem{L10}S.Cai, X.Qiang. Introduction to Qualitative Theory of Ordinary Differential Equations. Higher Education Press. 1994, (in Chinese).
\bibitem{L11}Z.Zhang, T.Ding, W.Huang. Qualitative Theory of Differential Equations. Science Press. 1985, (in Chinese).
\bibitem{L23}F. Dumortier, J. Llibre, C. Joan. Qualitative Theory of Planar Differential Systems, Springer-Verlag Berlin Heidelberg, 2006.

\bibitem{L37}B.Liu, L.Huang. Existence of periodic solutions for nonlinear $n$th order ordinary differential equations. Acta Mathematica Sinica, 2004, 47(6):1133-1140.

\bibitem{L32}A. N. Kolmogorov. On conservation of conditionally periodic motions for a small change in Hamilton's function. Doklady Akademii nauk SSSR, 1954, 98: 527-530.
\bibitem{L33}V. I. Arnold. Proof of a theorem of A. N. Kolmogorov on the invariance of quasi-periodic motions under small perturbations of the Hamiltonian. Russian Mathematical Surveys, 2007, 18(5):9-36.
\bibitem{L34}J. Moser. On invariant curves of area-preserving mappings of an annulus. Matematika, 1962.
\bibitem{L35}X. Li, Z. Shang. On the existence of invariant tori in non-conservative dynamical systems with degeneracy and finite differentiability. Discrete and Continuous Dynamical Systems, 2019, 39(7):4225-4257.
\bibitem{L36}H. R\"{u}ssmann. Invariant tori in non-degenerate nearly integrable Hamiltonian systems. Regular and Chaotic Dynamics, 2001, 6(2):119-204.

\bibitem{L9}Y. A. Kuznetsov. Elements of applied bifurcation theory. Applied Mathematical Sciences, 2004, 288(2): 715-730.
\bibitem{L21}C. Li, C. Rousseau. Codimension 2 symmetric homoclinic bifurcations and application to 1:2 resonance. Canadian Journal of Mathematics, 1990, 42(2): 279-287.
\bibitem{L22}W. Zhang, P. Yu. Degenerate bifurcation analysis on a parametrically and externally excited mechanical system. International Journal of Bifurcation and Chaos in Applied Sciences and Engineering, 2001, 689-709.

\end{list}

\end{document}